\documentclass[a4paper,twoside]{amsart}
\usepackage{amssymb}
\usepackage{amsmath}
\usepackage{amsthm}
\usepackage[non-compressed-cites]{amsrefs}

\usepackage{mathrsfs}
\usepackage{bbm}


\usepackage[normalem]{ulem}

\usepackage{enumerate}

\usepackage{tikz}
\usetikzlibrary{shapes,arrows}
\usepackage{float}
\usepackage{pgfmath}
\usepackage{color}

\newcommand{\bluecomment}[1]{{#1}}
\usepackage{graphicx}
\usetikzlibrary{positioning,patterns,calc}
\usepackage{caption}
\usepackage{subcaption}

\usepackage{comment}

\newcommand{\real}{\mathbb{R}} 
\newcommand{\pinteger}{\mathscr{P}} 

\newcommand{\varifolds}{\mathbf{V}} 
\newcommand{\rvarifolds}{\mathbf{RV}} 
\newcommand{\ivarifolds}{\mathbf{IV}} 
\newcommand{\cvarifolds}{\mathbf{CV}} 
\newcommand{\cbvarifolds}{\mathbf{AV}} 
\newcommand{\restrict}{\mathop{\llcorner}} 
\newcommand{\graphv}[1]{\mathbf{{\upsilon}}(#1)} 
\newcommand{\boundary}{\mathcal{B}}


\newcommand{\smoothcompact}[2]{\mathcal{D}(#1,#2)} 
\newcommand{\compactfunctions}[2]{\mathcal{K}(#1,#2)} 
\newcommand{\conecompact}[1]{\mathcal{C}_c^1(#1)} 
\newcommand{\lploc}[1]{\mathbf{L}_{\textup{loc}}^{#1}} 

\newcommand{\graph}[1]{\textup{graph}(#1)} 
\newcommand{\homomorphism}[2]{\textup{Hom}(#1,#2)} 

\newcommand{\openball}[2]{\mathbf{{U}}\left(#1,#2\right)} 
\newcommand{\closedball}[2]{\mathbf{{B}}\left(#1,#2\right)} 
\newcommand{\grassmannian}[2]{\mathbf{{G}}(#1,#2)} 
\newcommand{\totalgrassmannian}[2]{\mathbf{{G}}_{#1}(#2)} 

\newcommand{\tangentmap}[3]{\textup{Tan}^{#1}(#2,#3)} 
\newcommand{\tangent}[1]{\textup{Tan}(#1)} 
\newcommand{\rot}{R_{\frac{2\pi}{3}}} 
\newcommand{\reflect}{r_{x_1x_3}} 


\newcommand{\hausdorff}{\mathcal{H}} 
\newcommand{\mass}[1]{\|#1\|} 
\newcommand{\support}[1]{\textup{supp}\,#1} 
\newcommand{\lebesgue}[1]{\mathcal{L}^{#1}} 

\newcommand{\integrald}{\textup{d}} 
\newcommand{\innerproduct}[2]{\langle\, #1,#2\,\rangle} 
\newcommand{\cardinality}[1]{\textup{card }#1} 



\usepackage{etoolbox}
\makeatletter
\patchcmd{\@maketitle}
  {\ifx\@empty\@dedicatory}
  {\ifx\@empty\@date \else {\vskip3ex \centering\footnotesize\@date\par\vskip1ex}\fi
   \ifx\@empty\@dedicatory}
  {}{}
\patchcmd{\@adminfootnotes}
  {\ifx\@empty\@date\else \@footnotetext{\@setdate}\fi}
  {}{}{}
\makeatother


\newtheorem{theorem}{Theorem}[section]
\newtheorem{lemma}[theorem]{Lemma}
\newtheorem{remark}[theorem]{Remark}

\newtheorem{corollary}[theorem]{Corollary}

\newtheorem{definition}[theorem]{Definition}

\newtheorem*{theorem*}{Theorem}

\newtheorem*{corollary*}{Corollary}

\newtheorem{claim}{\texttt{Claim}}
\makeatletter
\@addtoreset{claim}{theorem}
\makeatother

\begin{document}
\title[Varifold Without Curvature Decomposition]{A Curvature Varifold Whose Weak Second Fundamental Form is not Preserved Under Decompositions}
\author{Nicolau S. Aiex
}
\date{\today}
\address{88, Sec.4, Tingzhou Road, SE Building SE809, Taipei, 116059, Taiwan}
\email{nsarquis@math.ntnu.edu.tw}

\begin{abstract}
We construct a curvature varifold that does not admit a decomposition whose components are curvature varifolds.
\end{abstract}

\maketitle

\section{Introduction}
The notion of a curvature varifold was introduced by Hutchinson in \cite{hutchinson1986} and it describes a weak version of second fundamental form on varifolds.
This allows to use the theory of varifolds to study geometric functionals that involve curvature, see for example \cite{mondino2014}.
Furthermore, Hutchinson also proved a regularity result when the curvature is sufficiently integrable see \cite{hutchinson1986.2} and \cite{aiex2024:arxiv} for the complete proof.

The concept of weakly differentiable functions on varifolds introduced by Menne in \cite{menne2016.1} establishes the connection between the weak second fundamental form and the tangent map of the varifold that is naturally defined.
In fact, Menne \cite{menne2016.1}*{Theorem 15.6} proves in particular that the weak second fundamental form corresponds to the weak derivative of the tangent map.

The theory of weakly differentiable functions and the properties developed in \cite{menne2016.1} were essential to complete the proof of graphical representation of curvature varifolds in \cite{aiex2024:arxiv}.
An important part of the proof was to construct partitions of the varifold at small scale in which every element of the partition is a curvature varifold and carries tilt-excess estimates at all scales.

These notes will, in some sense, show that the above is optimal and it cannot be improved to construct a decomposition instead of a partition.
The difference between a decomposition and a partition is that in the former one requires the elements to be indecomposable, that is, they cannot be further separated into pieces and are essentially connected.

As one would expect, regular varifolds are curvature varifolds with respect to the usual second fundamental form.
A simple example of a varifold that is not a curvature varifold (see Remark \ref{remark not curvature}) is a union of $3$ half-planes meeting along a common line at $120^\circ$ angle.
However, the varifold given by a union of $3$ planes intersecting along a common line at $60^\circ$ angle is a (non-regular) curvature varifold.
This simple example can be decomposed into two separate triple junctions, which are not curvature varifolds, but it is also decomposable into $3$ planes. 

\begin{figure}[H]
\captionsetup{justification=centering}
\centering
\begin{tikzpicture}[scale=0.8]
\draw (-4,0) -- (-2,0);
\draw (-3.5,-0.86) -- (-2.5,0.86);
\draw (-3.5,0.86) -- (-2.5,-0.86);	

\node [label={[label distance = -0.38 cm]0:$=$}] at (-1.5,0) {};

\draw (0,1) -- (-0.5,1.86);
\draw (0,1) -- (-0.5,0.14);
\draw (0,1) -- (1,1);
\draw (0,-1) -- (0.5,-0.14);
\draw (0,-1) -- (0.5,-1.86);
\draw (0,-1) -- (-1,-1);

\node [label={[label distance = -0.38 cm]0:$+$}] at (0,0) {};

\node [label={[label distance = -0.38 cm]0:$=$}] at (1.5,0) {};

\draw (2,0) -- (4,0);
\draw (2.5,-2.36) -- (3.5,-0.64);
\draw (2.5,2.36) -- (3.5,0.64);

\node [label={[label distance = -0.38 cm]0:$+$}] at (2.5,0.5) {};
\node [label={[label distance = -0.38 cm]0:$+$}] at (2.5,-0.5) {};

\end{tikzpicture}
\caption*{A curvature varifold with two possible decompositions: triple junction components and curvature varifold components.}
\label{fig:test4}
\end{figure}
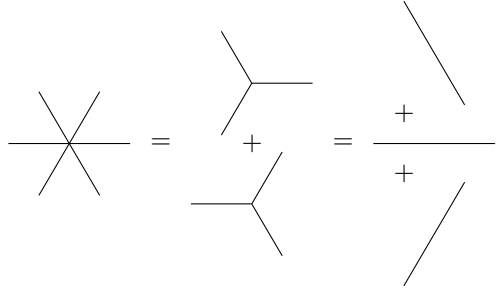

In \cite{menne-scharrer2022:arxiv}*{Example 5.10} Menne-Scharrer construct a varifold and a weakly differentiable function for which there is no decomposition that preserves weak differentiability.
Since the notion of curvature varifold is directly related to the weak differentiability of the tangent map function, which is intrinsically given by the varifold, one might ask if it is always possible to decompose a varifold such that all components preserve the differentiability property.
We will answer it in the negative by constructing a decomposable curvature varifold for which every possible component is not a curvature varifold.
The curvature varifold we obtain has a unique decomposition and its components are curvature varifolds with boundary in the sense of Mantegazza \cite{mantegazza1996}.
\bluecomment{It would be interesting to know if there exists an example so that its unique decomposition is not even a curvature varifold with boundary.}

The article is divided as follows.
In Section $2$ we compile the necessary definitions to describe decompositions of a varifold and the notion of curvature varifolds with boundary introduced by Mantegazza \cite{mantegazza1996}.
In Section $3$ we give a full description of the example and prove all its desired properties.

\textbf{Acknowledgments:} We would like to thank professor Ulrich Menne for several relevant discussions.
The author was funded by NSTC grant 113-2115-M-003-001.

\section{Preliminaries}
\noindent\textbf{Notation.} 
Let $n\in\pinteger$ be a positive integer, we denote $\{e_i\}_{i=1,\ldots,n}$ the canonical basis of $\real^n$, $\openball{x}{r}=\{y\in\real^n:|y-x|<r\}$ and $\closedball{x}{r}$ its closure.
Whenever $U\subset\real^n$ is an open set, $m,n\in\pinteger$ with $m\leq n$ we denote by \bluecomment{$\totalgrassmannian{m}{U}=U\times\grassmannian{m}{n}$ the Grassmannian over $U$}, $\varifolds_m(U)$, $\rvarifolds_m(U)$ and $\ivarifolds_m(U)$ the set of varifolds, rectifiable varifolds and integral varifolds on $U$ respectively.
Given $V\in\varifolds_m(U)$ \bluecomment{we define $\|V\|$ and $\delta V$ as in \cite{allard1972}} and for $A\subset U$ we denote $(V\restrict A)(B)=V(B\cap A\times\grassmannian{n}{m})$.
When $R\subset U$ is a $\hausdorff^m$-rectifiable set 
we write $\graphv{R}$ for the corresponding induced rectifiable $m$-varifold with density $1$ and $\tangent{R}:U\rightarrow\totalgrassmannian{m}{U}$ as $\tangent{R}(x)=(x,\tangentmap{m}{R}{x})$, which is well defined $(\hausdorff^m\restrict R)$-almost everywhere.
Whenever $P\in\grassmannian{n}{m}$ we write $P_\sharp\in\homomorphism{\real^n}{\real^n}\subset\real^{n^2}$ for the corresponding projection map in $\real^n$.
We denote $\smoothcompact{U}{\real^m}$ and $\compactfunctions{U}{\real^m}$ the spaces of $\real^m$-valued smooth compactly supported functions and continuous compactly supported functions in $U$ respectively (see \cite{menne2016.1}*{Definition 2.13} for the corresponding topologies).
The dual space $\mathcal{D}'(U,\real^m)$ denotes the space of distributions of type $\real^m$ in $U$.


\hfill

\noindent\textbf{Decomposition of Varifolds.}
The notion of a decomposition of varifolds is introduced in \cite{menne2016.1}*{Section 6} and we include it here for completion.
\begin{definition}[\cite{menne2016.1}*{5.1}]
Let $m,n\in\pinteger$, $U\subset\real^{n}$ be an open set, $V\in\varifolds_m(U)$ with $\mass{\delta V}$ a Radon measure and $E\subset U$ be a $\mass{V}+\mass{\delta V}$-measurable set.
The distributional boundary of $E$ with respect to $V$ is given by
\begin{equation*}
V\partial E = (\delta V)\restrict E - \delta(V\restrict E).
\end{equation*}
\end{definition}

\begin{definition}[\cite{menne2016.1}*{6.2}]
Let $m,n\in\pinteger$, $U\subset\real^{n}$ be an open set, $V\in\varifolds_m(U)$ with $\mass{\delta V}$ a Radon measure.
The varifold $V$ is said to be indecomposable if there exists no $\mass{V}+\mass{\delta V}$-measurable set $E\subset U$ satisfying $\mass{V}(E)>0$, $\mass{V}(U\setminus E)>0$ and $V\partial E = 0$.
\end{definition}

\begin{definition}[\cite{menne2016.1}*{6.6}]
Let $m,n\in\pinteger$, $U\subset\real^{n}$ be an open set, $V\in\varifolds_m(U)$ with $\mass{\delta V}$ a Radon measure.
A varifold $W\in\varifolds_m(U)$ is called a component of $V$ if $W\neq 0$, $W$ is indecomposable and there exists a $\mass{V}+\mass{\delta V}$-measurable set $E\subset U$ with $V\partial E=0$ such that $W= V\restrict E$.
\end{definition}

\begin{definition}[\cite{menne2016.1}*{6.9}]
Let $m,n\in\pinteger$, $U\subset\real^{n}$ be an open set, $V\in\varifolds_m(U)$ with $\mass{\delta V}$ a Radon measure.
A collection of varifolds $\Xi\subset\varifolds_m(U)$ is called a decomposition of $V$ if
\begin{enumerate}[(i)]
\item Every element of $\Xi$ is a component of $V$;
\item $V(f)=\sum_{W\in\Xi}W(f)$ for every $f\in\compactfunctions{\totalgrassmannian{m}{U}}{\real}$ and
\item $\mass{\delta V}(g)=\sum_{W\in\Xi}\mass{\delta W}(g)$ for every $g\in\compactfunctions{U}{\real}$.
\end{enumerate}
\end{definition}

We refer to \cite{menne2016.1} for further discussions and consequences of the above definitions.

\hfill

\noindent\textbf{Curvature Varifolds with Boundary Represented by Functions.}

If $\varphi\in\conecompact{\totalgrassmannian{m}{U}}$, then we write $D$ and $D^*$ for the derivative of $\varphi(x,P)$ with respect to $x$ and $P$ respectively.

The following definition was introduced in \cite{mantegazza1996}.

\begin{definition}[\cite{mantegazza1996}*{3.1}]
Let $m,n\in\pinteger$ be positive integers with $m< n$ and $U\subset\real^{n}$ be an open set.
We say that $V\in\varifolds_m(U)$ is a curvature varifold with boundary if there exists $A\in\lploc{1}(\totalgrassmannian{m}{U},\real^{n^3};V)$ and a $\real^n$-valued Radon vector measure $\partial V$ on $\totalgrassmannian{m}{U}$ such that
\begin{equation*}
\begin{aligned}
& \int_{\totalgrassmannian{m}{U}} P_{ij}D_j\varphi(x,P)+D^*_{jk}\varphi(x,P)A_{ijk}(x,P)+\varphi(x,P)A_{jij}(x,P)\integrald V(x,P) \\
& =-\int_{\totalgrassmannian{m}{U}}\varphi(x,P)\integrald {\partial_i V}(x,P)
\end{aligned}
\end{equation*}
for all $\varphi\in\conecompact{\totalgrassmannian{m}{U}}$ and $i=1,\ldots,n$.
In the above we sum over repeated indices and $\partial_i V$ is the signed measure $\partial_i V=\innerproduct{\partial V}{e_i}$.
We denote by $\cbvarifolds_m(U)$ the space of curvature $m$-varifolds with boundary in $U$.
Whenever needed we will simplify notation and write $\boundary_i(V,\varphi)$ to denote the left-hand side of the above definition and $\boundary(V,\varphi)=(\boundary_i)_{i=1,\ldots,n}$.
In particular, $V$ is a curvature varifold (without boundary) in the sense of Hutchinson \cite{hutchinson1986}*{5.2.1} if $\partial V=0$ and its space is denoted by $\cvarifolds_m(U)$.
\end{definition}

\begin{remark}
We say that a $\real^n$-valued measure is Radon if each coordinate signed measure is Radon.
\end{remark}

Let $n\in\pinteger$, $\Omega\subset\real^{n-1}$ be an open set, $g:\Omega\rightarrow\real$ be a function of class $C^2$.
Denote by $\Sigma=\{(y,g(y))\in\Omega\times\real:y\in\Omega\}$ the graph of $g$ and $\graphv{\Sigma}\in\ivarifolds_{n-1}(\Omega\times\real)$ the integral varifold corresponding to the graph of $g$ with density $1$.
We write $T(x)=\tangentmap{n-1}{\Sigma}{x}_\sharp\in\real^{n^2}$ for $x\in\Sigma$ and compute
\bluecomment{
\begin{equation*}
\begin{aligned}
& T_{ij}=\delta_{ij}- \partial_i g\partial_j g(1+|\nabla g|^2)^{-1}, i,j=1,\ldots,n-1;\\
& T_{in}=T_{ni}={\partial_ig}(1+|\nabla g|^2)^{-1}, i=1,\ldots,n-1;\\
& T_{nn}=1-(1+|\nabla g|^2)^{-1}.
\end{aligned}
\end{equation*}
}
We note that if $\Omega$ has regular boundary then $\graphv{\Sigma}$ is a curvature varifold with boundary with respect to $A(x)_{ijk}=T(x)_{il}\partial_l T(x)_{jk}$ (see \cite{mantegazza1996}*{p.811 (2.1)}).
\bluecomment{To simplify notation we write $A(x)_{ijk}$ instead of $A_{ijk}(x,T(x))$ for $x\in\Sigma$}.
\bluecomment{
A direct computation gives for each $l=1,\ldots,n-1$:
\begin{equation*}
\begin{aligned}
\partial_l T_{jk} & = - (\partial_l\partial_j g \partial_k g + \partial_j g\partial_l\partial_k g)(1+|\nabla g|^2)^{-1}\\
                  & \quad + 2\partial_j g\partial_k g(\sum_{m=1}^{n-1}\partial_m g\partial_l\partial_mg)(1+|\nabla g|^2)^{-2}, j,k=1,\ldots,n-1;\\
\partial_l T_{jn}=\partial_l T_{nj} & = \partial_l\partial_jg(1+|\nabla g|^2)^{-1}-2\partial_jg(\sum_{m=1}^{n-1}\partial_m g\partial_l\partial_mg)(1+|\nabla g|^2)^{-2},\\
                  & \qquad j=1,\ldots,n-1;\\
\partial_l T_{nn} & = (\sum_{m=1}^{n-1}\partial_m g\partial_l\partial_mg)(1+|\nabla g|^2)^{-2}.
\end{aligned}
\end{equation*}
}
\bluecomment{
\begin{remark}\label{Aijk remark}
We note that $|T_{ij}|\leq 1$ for all $i,j=1,\ldots, n$ and for some constant $C(n)$ depending only on $n$ we have $|\partial_l T_{jk}|\leq C(n)|D^2 g|$ for all $l=1,\ldots,n-1$ and $j,k=1,\ldots,n$.
It follows that $|A_{ijk}|\leq C(n)|D^2 g|$ for all $i,j,k=1,\ldots,n$ and some other constant constant $C(n)$ depending only on $n$.
\end{remark}
}
\begin{lemma}\label{graph varifolds}
If $\Omega$ has $C^1$ boundary and $g$ is $C^1$ along $\partial \Omega$ with respect to the inward conormal, then the above choice of $A_{ijk}$ makes $\graphv{\Sigma}$ into a curvature varifold with boundary $\partial\graphv{\Sigma}=\tangent{\Sigma}_\sharp(\nu\hausdorff^{n-1}\restrict\partial\Sigma)$, where $\nu$ is the inward conormal of $\partial\Sigma$.
\end{lemma}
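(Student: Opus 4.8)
The plan is to verify the defining integral identity of a curvature varifold with boundary directly, by transporting the integral over the Grassmannian $\totalgrassmannian{m}{U}$ down to $\Sigma$ and applying the tangential divergence theorem there. Write $m=n-1$, and regard $\graphv{\Sigma}$ as a varifold in an open set $U\subset\real^{n}$ containing $\Sigma\cup\partial\Sigma$. Since $\graphv{\Sigma}$ is the integral $m$-varifold of density $1$ carried by the graph of the Gauss map $x\mapsto\tangentmap{m}{\Sigma}{x}$, writing $T(x)=\tangentmap{m}{\Sigma}{x}_\sharp\in\real^{n^{2}}$ one has, for every locally $\mass{\graphv{\Sigma}}$-integrable $\psi$,
\begin{equation*}
\int_{\totalgrassmannian{m}{U}}\psi(x,P)\integrald\graphv{\Sigma}(x,P)=\int_{\Sigma}\psi(x,\tangentmap{m}{\Sigma}{x})\integrald\hausdorff^{m}(x).
\end{equation*}
In particular $A$ is only ever evaluated at $P=\tangentmap{m}{\Sigma}{x}$, where it equals $A(x)_{ijk}=T(x)_{il}\partial_{l}T(x)_{jk}$; by Remark \ref{Aijk remark} and the hypotheses on $g$ the function $A$ is locally $\graphv{\Sigma}$-integrable, and $\hausdorff^{m-1}\restrict\partial\Sigma$ is locally finite, so all the terms below are meaningful.

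The computation rests on two pointwise identities on $\Sigma$. First, since $A(x)_{ijk}=T(x)_{il}\partial_{l}T(x)_{jk}$ and the tangential gradient on $\Sigma$ acts by $\nabla^{\Sigma}_{i}=\sum_{l}T_{il}\partial_{l}$, one has $A_{ijk}=\nabla^{\Sigma}_{i}T_{jk}$; hence for $\varphi\in\conecompact{\totalgrassmannian{m}{U}}$ and the $C^{1}$ function $\phi:=\varphi(\cdot,\tangentmap{m}{\Sigma}{\cdot})$ on $\Sigma$, the chain rule gives
\begin{equation*}
T_{ij}D_{j}\varphi(x,T(x))+D^{*}_{jk}\varphi(x,T(x))A(x)_{ijk}=\nabla^{\Sigma}_{i}\phi(x).
\end{equation*}
Second, using $T\mathbf{H}=0$ for the mean curvature vector $\mathbf{H}$ of $\Sigma$, $T\nu=\nu$ for the conormal, and $T^{2}=T$, one integrates $\nabla^{\Sigma}_{j}(T_{ij}\psi)$ by parts over $\Sigma$ against an arbitrary $\psi$ and compares with the special case $\int_{\Sigma}\nabla^{\Sigma}_{i}\psi=-\int_{\Sigma}\psi\,H_{i}+\int_{\partial\Sigma}\psi\,\nu^{\mathrm{out}}_{i}$ to obtain $\sum_{j}A(x)_{jij}=\sum_{j}\nabla^{\Sigma}_{j}T(x)_{ij}=H_{i}(x)$, the $i$-th component of $\mathbf{H}$ (with the convention $\delta\graphv{\Sigma}(X)=-\int_{\Sigma}X\cdot\mathbf{H}\,\integrald\hausdorff^{m}+\int_{\partial\Sigma}X\cdot\nu^{\mathrm{out}}\,\integrald\hausdorff^{m-1}$, $\nu^{\mathrm{out}}=-\nu$ being the outward conormal). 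Combining, for each $i=1,\dots,n$,
\begin{equation*}
\boundary_{i}(\graphv{\Sigma},\varphi)=\int_{\Sigma}\bigl(\nabla^{\Sigma}_{i}\phi+\phi\,H_{i}\bigr)\integrald\hausdorff^{m}.
\end{equation*}

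Next I would apply the tangential divergence theorem on $\Sigma$ to the tangential vector field $X:=\phi\,Te_{i}$: since $Te_{i}$ is tangent to $\Sigma$ we have $X\cdot\mathbf{H}=0$ and $X\cdot\nu^{\mathrm{out}}=\phi\,\nu^{\mathrm{out}}_{i}$, while $\operatorname{div}^{\Sigma}(\phi\,Te_{i})=\phi\operatorname{div}^{\Sigma}(Te_{i})+(Te_{i})\cdot\nabla^{\Sigma}\phi=\phi\,H_{i}+\nabla^{\Sigma}_{i}\phi$. Hence
\begin{equation*}
\int_{\Sigma}\bigl(\nabla^{\Sigma}_{i}\phi+\phi\,H_{i}\bigr)\integrald\hausdorff^{m}=\int_{\partial\Sigma}\phi\,\nu^{\mathrm{out}}_{i}\integrald\hausdorff^{m-1}=-\int_{\partial\Sigma}\varphi(x,T(x))\,\nu_{i}(x)\integrald\hausdorff^{m-1}(x),
\end{equation*}
and therefore $\boundary_{i}(\graphv{\Sigma},\varphi)=-\int_{\totalgrassmannian{m}{U}}\varphi\,\integrald\bigl(\tangent{\Sigma}_\sharp(\nu_{i}\,\hausdorff^{m-1}\restrict\partial\Sigma)\bigr)$ for all $\varphi$ and all $i$. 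This is exactly the defining relation of $\cbvarifolds_{m}(U)$ with $A$ as in the statement and with the $\real^{n}$-valued Radon measure $\partial\graphv{\Sigma}=\tangent{\Sigma}_\sharp(\nu\,\hausdorff^{m-1}\restrict\partial\Sigma)$ given by the inward conormal $\nu$.

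The chain-rule identity and the divergence-theorem bookkeeping are routine; the delicate point is the boundary regularity. One must check that the assumption that $g$ is $C^{1}$ along $\partial\Omega$ with respect to the inward conormal is exactly what makes $\Sigma$ a $C^{1}$ manifold-with-boundary on which the tangential divergence theorem with boundary term applies, that $\partial\Sigma$ has locally finite $\hausdorff^{m-1}$-measure, and that $A$ --- equivalently $\mathbf{H}$, hence $D^{2}g$ via Remark \ref{Aijk remark} --- remains locally $\graphv{\Sigma}$-integrable up to $\partial\Sigma$; once these regularity matters are settled the identities above hold as written rather than merely formally, and the remaining steps are as in \cite{mantegazza1996}.
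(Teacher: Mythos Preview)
Your argument is correct and is exactly the approach the paper has in mind: the paper's entire proof is the single sentence ``It follows directly from the divergence theorem,'' and what you have written is a careful unpacking of that sentence---reducing the Grassmannian integral to $\Sigma$, identifying $A_{ijk}=\nabla^{\Sigma}_{i}T_{jk}$ and $\sum_{j}A_{jij}=H_{i}$, and applying the tangential divergence theorem to the tangential field $\phi\,Te_{i}$. (Incidentally, your $\hausdorff^{m-1}=\hausdorff^{n-2}$ on $\partial\Sigma$ is the correct dimension; the $\hausdorff^{n-1}$ in the statement is a typo.)
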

\begin{proof}
It follows directly from the divergence theorem.
\end{proof}

\section{Main Example}

First we take $\Phi:\real\rightarrow\real$ a bump function with the following properties:
\begin{enumerate}[(a)]
\item $0\leq\Phi\leq 1$, $\Phi(0)=1$;
\item $\support\Phi\subset(-1,1)$;
\item $\Phi(-t)=\Phi(t)$ for all $t\geq 0$;
\item $\Phi'(t)\leq 0$ for all $t\geq 0$;
\item $\sup|\Phi'|\leq 4$ and
\item $\int_\real\Phi(t)\integrald\lebesgue{1}t=1$.
\end{enumerate}

Let us define $\Omega=\{x\in\real^2:x_1>0\}$ and a function $g:\Omega\rightarrow\real$ as
\begin{equation*}
\begin{aligned}
g(x_1,x_2) & = \frac{1}{\sqrt{3}}\int_{-x_2}^{x_2}\Phi\left(\frac{t}{x_1}\right)\integrald\lebesgue{1}t\\
					 & = \frac{x_1}{\sqrt{3}}\int^{\frac{x_2}{x_1}}_{-\frac{x_2}{x_1}}\Phi\left(\tau\right)\integrald\lebesgue{1}\tau.
\end{aligned}
\end{equation*}

\begin{lemma}\label{function g}
The function $g$ defined above is smooth and satisfies:
\begin{enumerate}[(i)]
\item $g(x_1,0)=0$;
\item $g(x_1,x_2)=\frac{x_1}{\sqrt{3}}\frac{x_2}{|x_2|}$ on $\{x\in\Omega:|x_2|>x_1\}$ and
\item $\int_{K\cap\Omega}|D^2g|\integrald\lebesgue{2}<\infty$ for all compact sets $K\subset\real^2$.
\end{enumerate}
\end{lemma}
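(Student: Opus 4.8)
The plan is to replace $g$ by its scale-invariant description, which is already visible in the second displayed formula for $g$: writing $G(s)=\int_{-s}^{s}\Phi(\tau)\,\integrald\lebesgue{1}\tau$, one has $g(x_1,x_2)=\frac{x_1}{\sqrt3}\,G(x_2/x_1)$ on $\Omega$. Since $\Phi$ is smooth, $G$ is smooth with $G'=2\Phi$ (using property (c)) and $G''=2\Phi'$; moreover $G$ is odd with $G(0)=0$, and by (b) and (f), $G(s)=\sign(s)$ whenever $|s|\ge1$. Because $x_1>0$ on $\Omega$, the map $x\mapsto(x_1,x_2/x_1)$ is smooth from $\Omega$ to $\Omega$, so $g\in C^\infty(\Omega)$ as a product and composition of smooth functions. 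Two structural facts will then carry the argument: $g$ is positively homogeneous of degree $1$, i.e.\ $g(\lambda x)=\lambda g(x)$ for $\lambda>0$, and $g$ is affine on the open cone $\{|x_2|>x_1\}$.

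Given this, (i) and (ii) are essentially free. For (i) I plug in $x_2=0$ and use $G(0)=0$. For (ii), on $\{|x_2|>x_1\}$ one has $|x_2/x_1|>1$, hence $G(x_2/x_1)=\sign(x_2)=x_2/|x_2|$, which gives exactly $g(x_1,x_2)=\frac{x_1}{\sqrt3}\frac{x_2}{|x_2|}$.

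The only real content is (iii), and the hard part is that $g$ is merely Lipschitz — not $C^2$ — up to the corner of $\Omega$ at the origin, so one must still control $|D^2g|$ there. I would differentiate the homogeneity relation $g(\lambda x)=\lambda g(x)$ twice in $x$ to see that $D^2g$ is positively homogeneous of degree $-1$; taking $\lambda=x_1$ this gives, for $x\in\Omega$,
$$D^2g(x_1,x_2)=\frac{1}{x_1}\,D^2g\!\left(1,\tfrac{x_2}{x_1}\right).$$
By (ii) together with continuity of $D^2g$ on $\Omega$, the function $t\mapsto D^2g(1,t)$ is continuous and vanishes for $|t|\ge1$, hence is bounded by some constant $C_0$; equivalently one can just compute $\partial_i\partial_j g=\frac{1}{\sqrt3\,x_1}\,p_{ij}(x_2/x_1)\,\Phi'(x_2/x_1)$ with $p_{ij}$ polynomials of degree at most $2$ and invoke (e). Either way, $D^2g$ vanishes on $\{|x_2|\ge x_1\}$ and $|D^2g(x)|\le C_0/x_1$ on the cone $\{|x_2|<x_1\}$.

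Finally, given a compact $K\subset\real^2$, choose $R>0$ with $K\subset\closedball{0}{R}$; since $\support{D^2g}\cap K\subseteq\{|x_2|\le x_1\}\cap\closedball{0}{R}$,
$$\int_{K\cap\Omega}|D^2g|\,\integrald\lebesgue{2}\le C_0\int_{0}^{R}\!\!\int_{-x_1}^{x_1}\frac{1}{x_1}\,\integrald x_2\,\integrald x_1=2C_0R<\infty,$$
which is (iii). So the entire argument reduces to bookkeeping in the scaling variable $x_2/x_1$; the single genuinely delicate point is that the degree-$(-1)$ homogeneous singularity of $D^2g$ accumulating at the origin is nevertheless $\lebesgue{2}$-integrable in the plane, which is precisely what (iii) records.
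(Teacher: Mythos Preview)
Your proof is correct and follows essentially the same route as the paper: both establish (i) and (ii) by direct substitution, then show $D^2g$ vanishes on $\{|x_2|\ge x_1\}$ and is bounded by $C/x_1$ on the cone $\{|x_2|<x_1\}$, and finish by integrating this singularity near the origin. The only cosmetic differences are that you package the second-derivative bound via the degree $-1$ homogeneity of $D^2g$ rather than writing out the three partials, and you integrate in Cartesian rather than polar coordinates.
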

\begin{proof}
First we note that properties (i) and (ii) follow from trivial calculations.
Next we compute the first partial derivatives of $g$
\begin{equation*}
\begin{aligned}
 \partial_1g & = \frac{1}{\sqrt{3}}\int_{-\frac{x_2}{x_1}}^{\frac{x_2}{x_1}}\Phi(\tau)\integrald\lebesgue{1}\tau-\frac{2x_2}{x_1\sqrt{3}}\Phi\left(\frac{x_2}{x_1}\right),\\
 \partial_2g & = \frac{2}{\sqrt{3}}\Phi\left(\frac{x_2}{x_1}\right).
\end{aligned}
\end{equation*}
It follows that $|\partial_1 g|\leq\frac{3}{\sqrt{3}}$ and $|\partial_2 g|\leq \frac{2}{\sqrt{3}}$ whenever $|x_2|<x_1$.

We compute the second partial derivatives:
\begin{equation*}
\begin{aligned}
\partial_1^2g & = \frac{2x_2^2}{x_1^3\sqrt{3}}\Phi'\left(\frac{x_2}{x_1}\right),\\
\partial_1\partial_2g=\partial_2\partial_1g & = \frac{-2x_2}{x_1^2\sqrt{3}}\Phi'\left(\frac{x_2}{x_1}\right) \text{ and }\\
\partial_2^2g & = \frac{2}{x_1\sqrt{3}}\Phi'\left(\frac{x_2}{x_1}\right).
\end{aligned}
\end{equation*}
Hence, $D^2g(x)=0$ on $\{x\in\Omega:x_1<|x_2|\}$ so we conclude by using polar coordinates that
\begin{equation*}
\int_{(0,1]\times[-1,1]}|D^2g|(x)\integrald\lebesgue{2}x<\infty,
\end{equation*}
which is sufficient to prove the final statement.
\end{proof}

\begin{remark}\label{function g remark}
We observe that $g$ can be extended smoothly to $\bar\Omega\setminus\{(0,0)\}$.
If we define $A_{ijk}(x)$ for $x\in\real^3$ and $i,j,k=1,2,3$ with respect to the above function $g$ as in Lemma \ref{graph varifolds}, then we have $A_{ijk}(x)=0$ on $\{x\in\real^3:0<x_1\leq |x_2|,x_3=g(x_1,x_2)\}$ and $|A_{ijk}(x)|\leq C|D^2g(x)|$ on $\{x\in\real^3:|x_2|< x_1,x_3=g(x_1,x_2)\}$ for all $i,j,k=1,2,3$ and some positive constant $C>0$ (see Remark \ref{Aijk remark}).
Therefore $\int_{K\cap\graph{g}} |A_{ijk}|(x)\integrald\hausdorff^2 x<\infty$ for every compact set $K\subset\real^3$ and $i,j,k=1,2,3$.
Furthermore, from the same calculations as above it follows that $\int_{\openball{0}{\varepsilon}\cap\graph{g}}|A_{ijk}|\integrald\hausdorff^2 x$ tends to $0$ as $\varepsilon$ tends to $0$.
\end{remark}

Let us denote $\Sigma_1=\{(x,g(x))\in\real^3:x\in\Omega\}$, $\Omega^\pm=\{x\in\Omega:\pm x_2>0\}$, $L=\{x\in\real^3:x_1=x_3=0\}$ and $L^\pm=\{x\in L:\pm x_2>0\}$.
We define $\Sigma_1^\pm=\{(x,g(x))\in\real^3:x\in\Omega^\pm\}$ and observe that the inward conormal vector field of $\Sigma_1^\pm$ along $L^\pm$ is given by $\nu_1^\pm=(\frac{\sqrt{3}}{2},0,\pm\frac{1}{2})$.
Let $\rot,\reflect:\real^3\rightarrow\real^3$ denote the rotation by $\frac{2\pi}{3}$ around the $x_2$-axis and the reflection across the $x_1x_3$-plane respectively.

Next we define:
\begin{equation*}
\begin{aligned}
& \Sigma_2 = \reflect(\Sigma_1),\\
& \Sigma_3 = \rot(\Sigma_1),\\
& \Sigma_5 = \rot(\Sigma_3),\\
& \Sigma_4 = \rot(\Sigma_2),\\
& \Sigma_6 = \rot(\Sigma_4)\\
\end{aligned}
\end{equation*}
and similarly $\Sigma_i^\pm$ for $i=2,\ldots,6$.
We also define $\nu_2^\pm=(\frac{\sqrt{3}}{2},0,\mp\frac{1}{2})$ and $\nu_i^\pm=\rot(\nu_{i-2}^\pm)$ for $i=3,\ldots,6$.
Note that $\nu_i^\pm$ is the inward conormal of $\Sigma_i^\pm$ along the boundary component $L^\pm$.

Finally, we write $W_i=\graphv{\Sigma_i}$ for $i=1,\ldots,6$ and the main example is given by $V=\sum_{i=1}^6W_i$.
We further write $Z_1=W_1+W_3+W_5$ and $Z_2=W_2+W_4+W_6$.
We will later prove in Remark \ref{z indecomposable} that $Z_1$ and $Z_2$ are indecomposable and $\Xi=\{Z_1,Z_2\}$ is a decomposition of $V$.
In the remainder of the section we will prove that $V$ is a curvature varifold (without boundary), $\Xi=\{Z_1,Z_2\}$ is the unique decomposition of $V$ and each $Z_1,Z_2$ are not curvature varifolds (without boundary).

Denote $t_1=e_1$, $t_2=\rot(t_1)$, $t_3=\rot(t_2)$, $T_k=\{\lambda t_k:\lambda> 0\}$ for $k=1,2,3$ and $T=\cup_{k=1}^3T_k$.
Let $C_k=\{x\in\real^3\setminus\{0\}:\innerproduct{\frac{x}{|x|}}{t_k}>\sqrt{\frac{3}{7}}\}$ be the open half-cone with central axis $T_k$ and angle $\cos^{-1}(\sqrt{\frac{3}{7}})$ for $k=1,2,3$, which is the angle between the half-line $\{(x_1,x_1,g(x_1,\pm x_1)):x_1>0\}$ and the $x_1$-axis.
Further write $C=\cup_{k=1}^3C_k$, $D=\real^3\setminus\bar{C}$ and $D^\pm=\{x\in D:\pm x_2>0\}$.
We also define $\eta_1^\pm=\pm\sqrt{\frac{3}{7}}(0,1,\frac{2}{\sqrt{3}})$, $\eta_2^\pm=\pm\sqrt{\frac{3}{7}}(0,1,-\frac{2}{\sqrt{3}})$ and $\eta_i^\pm=\rot(\eta_{i-2}^\pm)$ for $i=3,\ldots,6$.
Note that $\eta_i^\pm$ is the inward conormal of $\Sigma_i^\pm$ along the boundary component $T_{\lceil \frac{i}{2} \rceil}=(\bar{\Sigma}_i^\pm\setminus\Sigma_i^\pm)\cap C_{\lceil \frac{i}{2} \rceil}$. 

\setlength{\intextsep}{10pt}
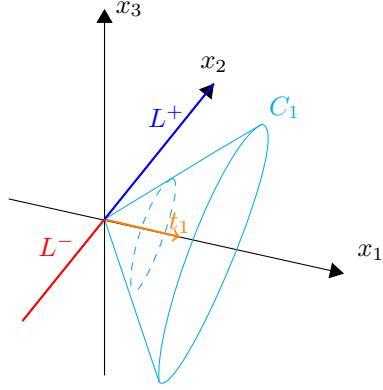
\begin{figure}[H]
\captionsetup{justification=centering}
\centering
  \begin{tikzpicture}[scale=0.9]

\draw[-triangle 60] (0,-2.3) -- (0,3.1);
\draw[-triangle 60] (-1.4,0.3) -- (3.5,-0.8);
\node [label={[label distance = -0.1 cm]45:$x_1$}] at (3.5,-0.8) {};
\node [label={[label distance = -0.1 cm]0:$x_3$}] at (0,3.1) {};
\draw [-triangle 60](-1.2,-1.5) -- (1.6,2);
\node [label={[label distance = -0.1 cm]90:$x_2$}] at (1.6,2) {};

\draw[cyan] (0,0) -- (0.25,-0.75) -- (0.55,-1.65) -- (0.8,-2.4);
\draw[cyan] (0,0) -- (0.65,0.4) -- (1.625,1) -- (2.3,1.4);
\draw[cyan] (2.3,1.4) .. controls (1.7,1.1) and (0.7,-1.6) .. (0.8,-2.4);
\draw[cyan] (2.3,1.4) .. controls (2.8,1.3) and (1.1,-2.7) .. (0.8,-2.4);
\draw[dashed,cyan] (1,0.6) .. controls (0.7,0.5) and (0.3,-0.6) .. (0.4,-1.1);
\draw[dashed,cyan] (1,0.6) .. controls (1.2,0.5) and (0.6,-1.1) .. (0.4,-1.1);

\draw[->,orange,thick] (0,0)  -- (1.1,-0.26);
\node [label={[label distance = -0.2 cm]90:{\color{orange}$t_1$}}] at (1.1,-0.26) {};
\node [label={[label distance = -0.2 cm]60:{\color{cyan}$C_1$}}] at (2.3,1.4) {};

\draw[blue,thick] (0,0) -- (1.6,2);
\node [label={[label distance = -0.1 cm]90:{\color{blue}$L^+$}}]  at (0.9,1.2) {};
\draw[red,thick] (0,0) -- (-1.2,-1.5);
\node [label={[label distance = -0.1 cm]90:{\color{red}$L^-$}}]  at (-0.7,-0.7) {};
\end{tikzpicture}
  \caption*{Non-planar region $C_1$ and boundary of planar region $L^+,L^-$.}
\label{fig:cone}
\end{figure}

We note that $\Sigma_{2k+1}\cap\Sigma_{2k}=T_k$ for $k=1,2,3$ and the intersection is transversal, which can be observe for $\Sigma_1$ and $\Sigma_2$ since $\partial_2g(x_1,0)>0$, as it can be seen in the proof of Lemma \ref{function g}, the tangent vector $(x_1,0,\partial_2g(x_1,0))$ on $\Sigma_1$ along $T_1$ is reflected to $(x_1,0,-\partial_2g(x_1,0))$ on $\Sigma_2$ along $T_1$, whilst the tangent direction $(1,0,0)$ along $T_1$ is preserved.
The transversality if $\Sigma_{2k+1}\cap\Sigma_{2k}$ for $k=2,3$ is then preserved by rotation.
Define the planes $P_i^\pm=\{\lambda\nu_i^\pm+\mu e_2:\lambda,\mu\in\real\}$ for $i=1,\ldots,6$.
Due to the choice of angle for the cones $C_k$ we have precisely that $\Sigma_i^{\pm}\cap D=P_i^{\pm}\cap \{x\in D^\pm:\innerproduct{x}{\nu_i^\pm}>0\}$ for $i=1,\ldots,6$.
Therefore, $\tangentmap{2}{W_i}{x}=P_i^\pm$ for all $x\in\support{\mass{W_i}}\cap D^\pm$ and $i=1,\ldots,6$.

\begin{figure}[H]
\centering
\begin{subfigure}{.5\textwidth}
  \centering
  \begin{tikzpicture}[scale=0.3]

\draw[-triangle 60] (0,-5) -- (0,5);
\draw[-triangle 60] (-5,0) -- (5,0);
\node [label={[label distance = -0.1 cm]45:$x_1$}] at (5,0) {};
\node [label={[label distance = -0.1 cm]60:$x_3$}] at (0,5) {};

\begin{scope}[scale=0.8]
\draw [->,blue,thick](0,0) -- (4,2.3);
\node [label={[label distance = -0.1 cm]0:{\color{blue}$\nu_1^+$}}] at (4,2.3) {};
\draw [->,blue,thick](0,0) -- (-4,2.3);
\node [label={[label distance = -0.1 cm]90:{\color{blue}$\nu_3^+$}}] at (-4,2.3) {};
\draw [->,blue,thick](0,0) -- (0,-4.6);
\node [label={[label distance = -0.1 cm]0:{\color{blue}$\nu_5^+$}}] at (0,-4.6) {};

\draw [->,red,thick](0,0) -- (0,4.6);
\node [label={[label distance = -0.1 cm]0:{\color{red}$\nu_4^+$}}] at (0,4.6) {};
\draw [->,red,thick](0,0) -- (-4,-2.3);
\node [label={[label distance = -0.1 cm]90:{\color{red}$\nu_6^+$}}] at  (-4,-2.3) {};
\draw [->,red,thick](0,0) -- (4,-2.3);
\node [label={[label distance = -0.1 cm]0:{\color{red}$\nu_2^+$}}] at (4,-2.3) {};
\end{scope}
\end{tikzpicture}
  \caption*{Conormal on planar region at $x_2>0$.}
  \label{fig:nuplus}
\end{subfigure}%
\begin{subfigure}{.5\textwidth}
  \centering
  \begin{tikzpicture}[scale=0.3]

\draw[-triangle 60] (0,-5) -- (0,5);
\draw[-triangle 60] (-5,0) -- (5,0);
\node [label={[label distance = -0.1 cm]45:$x_1$}] at (5,0) {};
\node [label={[label distance = -0.1 cm]60:$x_3$}] at (0,5) {};

\begin{scope}[scale=0.8]
\draw [->,red,thick](0,0) -- (4,2.3);
\node [label={[label distance = -0.1 cm]0:{\color{red}$\nu_2^-$}}] at (4,2.3) {};
\draw [->,red,thick](0,0) -- (-4,2.3);
\node [label={[label distance = -0.1 cm]90:{\color{red}$\nu_4^-$}}] at (-4,2.3) {};
\draw [->,red,thick](0,0) -- (0,-4.6);
\node [label={[label distance = -0.1 cm]0:{\color{red}$\nu_6^-$}}] at (0,-4.6) {};

\draw [->,blue,thick](0,0) -- (0,4.6);
\node [label={[label distance = -0.1 cm]0:{\color{blue}$\nu_3^-$}}] at (0,4.6) {};
\draw [->,blue,thick](0,0) -- (-4,-2.3);
\node [label={[label distance = -0.1 cm]90:{\color{blue}$\nu_5^-$}}] at  (-4,-2.3) {};
\draw [->,blue,thick](0,0) -- (4,-2.3);
\node [label={[label distance = -0.1 cm]0:{\color{blue}$\nu_1^-$}}] at (4,-2.3) {};
\end{scope}
\end{tikzpicture}
  \caption*{Conormal on planar region at $x_2<0$.}
  \label{fig:numinus}
\end{subfigure}
\label{fig:test}
\end{figure}

\begin{remark}\label{vector identities}
With the above notation and definitions we have the following identities:
\begin{equation*}
\begin{aligned}
& \nu_1^+ + \nu_6^+ = 0, P_1^+=P_6^+\\
& \nu_3^+ + \nu_2^+ = 0, P_3^+=P_2^+\\
& \nu_5^+ + \nu_4^+ = 0, P_5^+=P_4^+\\
& \nu_1^- + \nu_4^- = 0, P_1^-=P_4^-\\
& \nu_3^- + \nu_6^- = 0, P_3^-=P_6^-\\
& \nu_5^- + \nu_2^- = 0, P_5^-=P_2^- \text{ and }\\
\nu_1^\pm+&\nu_3^\pm+\nu_5^\pm=\nu_2^\pm+\nu_4^\pm+\nu_6^\pm=0.
\end{aligned}
\end{equation*}
\end{remark}

\begin{lemma}\label{w_i boundary}
$W_i\in\cbvarifolds_2(\real^3)$ is a curvature varifold with boundary given by
\begin{equation*}
\partial W_i=\tangent{W_i}_\sharp(\nu_i^+\hausdorff^1\restrict L^+ + \nu_i^-\hausdorff^1\restrict L^-)
\end{equation*}
 for each $i=1,\ldots,6$.
In particular $Z_1$ and $Z_2$ are curvature varifolds with non-zero boundary.
\end{lemma}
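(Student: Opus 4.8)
The plan is to prove the assertion for $W_1$ (the crux), then obtain $W_2,\dots,W_6$ by equivariance, and finally deduce the claims about $Z_1,Z_2$ by linearity. First I would verify the defining identity of $\cbvarifolds_2(\real^3)$ for $W_1$ against test functions supported away from the origin. By Remark~\ref{function g remark}, $g$ is smooth on $\bar\Omega\setminus\{0\}$, so for each $\delta>0$ the surface $\Sigma_1\setminus\openball{0}{\delta}$ is $C^\infty$ and its boundary consists of the straight segment $L\setminus\openball{0}{\delta}$, with inward conormals $\nu_1^\pm$, together with a spherical arc. Hence, for every $\varphi\in\conecompact{\totalgrassmannian{2}{\real^3}}$ with $\support\varphi\cap\closedball{0}{\delta}=\emptyset$, the divergence theorem --- that is, the computation behind Lemma~\ref{graph varifolds}, now carried out on $\Sigma_1\setminus\openball{0}{\delta}$, the spherical arc contributing nothing since $\varphi$ vanishes there --- gives $\boundary_i(W_1,\varphi)=-\int\varphi\,\integrald\partial_i W_1$ for $i=1,2,3$, with $A$ the curvature function of Lemma~\ref{graph varifolds}/Remark~\ref{function g remark} and $\partial W_1=\tangent{W_1}_\sharp(\nu_1^+\hausdorff^1\restrict L^++\nu_1^-\hausdorff^1\restrict L^-)$.

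To remove the restriction on $\support\varphi$, I would cut off near $0$: fix a smooth $\chi$ on $\real^3$ with $\chi\equiv0$ on $\openball{0}{1}$ and $\chi\equiv1$ off $\openball{0}{2}$, put $\chi_\varepsilon(x)=\chi(x/\varepsilon)$, and apply the identity above to $\chi_\varepsilon\varphi\in\conecompact{\totalgrassmannian{2}{\real^3}}$. Since $\chi_\varepsilon$ is independent of the plane variable, $D^*_{jk}(\chi_\varepsilon\varphi)=\chi_\varepsilon D^*_{jk}\varphi$, and expanding shows that $\boundary_i(W_1,\varphi)-\boundary_i(W_1,\chi_\varepsilon\varphi)$ equals the integral of $(1-\chi_\varepsilon)$ against $P_{ij}D_j\varphi+D^*_{jk}\varphi\,A_{ijk}+\varphi\,A_{jij}$ minus the single term $\int P_{ij}(\partial_j\chi_\varepsilon)\,\varphi\,\integrald W_1$, all integrands supported in $\closedball{0}{2\varepsilon}$. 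The first is dominated by $C(\varphi)\bigl(\mass{W_1}(\closedball{0}{2\varepsilon})+\int_{\closedball{0}{2\varepsilon}\cap\graph{g}}|A|\,\integrald\hausdorff^2\bigr)$, which tends to $0$ by Remark~\ref{function g remark}; the second, since $|D\chi_\varepsilon|\le C\varepsilon^{-1}$, is dominated by $C(\varphi)\,\varepsilon^{-1}\mass{W_1}(\closedball{0}{2\varepsilon})$, which tends to $0$ because $g$ is Lipschitz and hence $\mass{W_1}(\closedball{0}{r})\le Cr^{2}$. Since moreover $\mass{\partial W_1}$ is a constant multiple of $\hausdorff^1\restrict(L\setminus\{0\})$, it gives no mass to $\{0\}$ and $\int\chi_\varepsilon\varphi\,\integrald\partial_i W_1\to\int\varphi\,\integrald\partial_i W_1$. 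Letting $\varepsilon\to0$ gives $W_1\in\cbvarifolds_2(\real^3)$ with the stated boundary. I expect this passage across the origin --- where $\Sigma_1$ fails to be $C^1$, and for which the smallness estimates of Remark~\ref{function g remark} are exactly designed --- to be the one genuinely delicate point.

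For $i=2,\dots,6$, $W_i$ is the image of $W_1$ under a composition of $\reflect$ and $\rot$; since being a curvature varifold with boundary is invariant under orthogonal maps of $\real^3$, with the curvature tensor and the boundary vector measure transforming equivariantly, and since $\reflect,\rot$ carry $L$ onto $L$ and $\nu_1^\pm$ onto the vectors denoted $\nu_i^\pm$ (with $\reflect$ swapping the superscripts), we obtain $W_i\in\cbvarifolds_2(\real^3)$ with $\partial W_i=\tangent{W_i}_\sharp(\nu_i^+\hausdorff^1\restrict L^++\nu_i^-\hausdorff^1\restrict L^-)$. Finally, $Z_1=W_1+W_3+W_5\in\cbvarifolds_2(\real^3)$ because the defining relation is linear in the varifold: it holds for $Z_1$ with curvature function the Radon--Nikodym derivative, with respect to $\mass{Z_1}$, of the sum of the curvature measures of $W_1,W_3,W_5$ (which is again locally integrable) and with $\partial Z_1=\partial W_1+\partial W_3+\partial W_5$. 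This is nonzero: over $L^+$ the three summands are carried by the pairwise disjoint subsets $L^+\times\{P_1^+\}$, $L^+\times\{P_3^+\}$, $L^+\times\{P_5^+\}$ of $\totalgrassmannian{2}{\real^3}$ --- the planes $P_1^+,P_3^+,P_5^+$ being distinct since $\rot$ does not preserve $P_1^+$ --- so no cancellation occurs over $L^+$; as $\partial W_1\neq0$ (because $\nu_1^+\neq0$), we conclude $\partial Z_1\neq0$, and symmetrically $\partial Z_2\neq0$.
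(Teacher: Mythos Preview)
Your proof is correct and follows essentially the same route as the paper: a cutoff near the origin, the divergence theorem on $\Sigma_1\setminus\closedball{0}{\varepsilon}$ producing the $L^\pm$ boundary terms, and the estimates of Remark~\ref{function g remark} together with $\mass{W_1}(\closedball{0}{r})\le Cr^2$ to kill the error as $\varepsilon\to0$. The only organizational differences are that the paper treats all $W_i$ at once rather than reducing to $W_1$ via equivariance under $\reflect,\rot$, and that the paper defers the ``$\partial Z_k\neq0$'' verification to Remark~\ref{remark not curvature} (via $\pi_\sharp$), whereas you argue it directly from the disjointness of the fibers $L^+\times\{P_1^+\},L^+\times\{P_3^+\},L^+\times\{P_5^+\}$; both arguments exploit the same fact that $P_1^+,P_3^+,P_5^+$ are pairwise distinct.
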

\begin{proof}
Let $\varphi\in\conecompact{\totalgrassmannian{2}{\real^3}}$ be an arbitrary function and $\varepsilon>0$.
Take $\psi_\varepsilon:\real^3\rightarrow\real$ a smooth cut-off function satisfying:
\begin{enumerate}[(a)]
\item $0\leq\psi_\varepsilon(x)\leq 1$,
\item $\psi_\varepsilon(x)=1$ for all $x\in\openball{0}{\varepsilon}$,
\item $\support{\psi_\varepsilon}\subset\openball{0}{2\varepsilon}$ and
\item $|\nabla\psi_\varepsilon|\leq\frac{2}{\varepsilon}$.
\end{enumerate}
Define $\varphi_\varepsilon(x,P)=\psi_\varepsilon(x)\varphi(x,P)$ and $\bar\varphi_\varepsilon(x,P)=(1-\psi_\varepsilon(x))\varphi(x,P)$ so that $\varphi=\varphi_\varepsilon+\bar\varphi_\varepsilon$, $\support{\varphi_\varepsilon}\subset\openball{0}{2\varepsilon}\times\grassmannian{3}{2}$ and $\support{\bar\varphi_\varepsilon}\subset(\real^3\setminus\closedball{0}{\varepsilon})\times\grassmannian{3}{2}$.
Observe that for each $l=1,2,3$ we have $\boundary_l(W_i,\varphi)=\boundary_l(W_i,\varphi_\varepsilon)+\boundary_l(W_i,\bar\varphi_\varepsilon)$.

Let $A_{ljk}(x)$ be defined by $g$ as in Lemma \ref{graph varifolds} and, as in Section $2$, we simplify notation by writing $A_{ljk}(x,P)=A_{ljk}(x)$ when $P=T_x\Sigma_i$ and $0$ otherwise.
It follows from Lemma \ref{function g} that $A\in\lploc{1}(\totalgrassmannian{2}{\real^3},\real^{3^3};W_i)$.
We compute for each $l=1,2,3$:
\begin{equation*}
\begin{aligned}
\boundary_l(W_i,\varphi_\varepsilon) = & \int_{{\real^3}}(T_x\Sigma_i)_{lj}D_j\varphi_\varepsilon(x,T_x\Sigma_i)\\
                                       & \quad +D_{jk}^*\varphi_\varepsilon(x,T_x\Sigma_i)A_{ljk}(x)+\varphi_\varepsilon(x,T_x\Sigma_i)A_{jlj}(x)\integrald\mass{W_i}x\\
																		 = & \int_{\openball{0}{2\varepsilon}}(T_x\Sigma_i)_{lj}(D_j\psi_\varepsilon(x)\varphi(x,T_x\Sigma_i)+\psi_\varepsilon(x)D_j\varphi(x,T_x\Sigma_i))\\
                                       & \quad +\psi_\varepsilon(x)(D_{jk}^*\varphi(x,T_x\Sigma_i)A_{ljk}(x)+\varphi(x,T_x\Sigma_i)A_{jlj}(x))\integrald\mass{W_i}x\\
																  \leq & (\frac{2n}{\varepsilon}\sup|\varphi| + n\sup|\nabla\varphi|)\mass{W_i}(\openball{0}{2\varepsilon})\\
																	     & \quad + (n^2\sup|\nabla^*\varphi|+n\sup|\varphi|)\sup_{jk}\int_{\openball{0}{2\varepsilon}}|A_{ijk}(x)|\integrald\mass{W_i}x.
\end{aligned}
\end{equation*}
Hence, $\lim_{\varepsilon\rightarrow 0}\boundary_l(W_i,\varphi_\varepsilon)=0$, where the first term tends to $0$ since $|Dg|^2$ is uniformly bounded on $\openball{0}
{1}\cap\{x\in\real^3:x\in\graph{g}\}$ and the second term tends to $0$ from Remark \ref{function g remark}.
Similarly we compute
\begin{equation*}
\begin{aligned}
\boundary_l(W_i,\bar\varphi_\varepsilon) = & \int_{\real^3\setminus\closedball{0}{\varepsilon}}(T_x\Sigma_i)_{lj}D_j\bar\varphi_\varepsilon(x,T_x\Sigma_i)\\
                               & \quad +D_{jk}^*\bar\varphi_\varepsilon(x,T_x\Sigma_i)A_{ljk}(x)+\bar\varphi_\varepsilon(x,T_x\Sigma_i)A_{jlj}(x)\integrald\mass{W_i}x.\\
\end{aligned}
\end{equation*}
It follows from the Divergence Theorem on $\Sigma_i$ with respect to the vectorfield $\bar{\varphi}_\varepsilon(x,T_x\Sigma_i)(T_x\Sigma_i)_\sharp e_l$ that
\begin{equation*}
\begin{aligned}
\boundary_l(W_i,\bar\varphi_\varepsilon) = & -\int_{L\cap(\real^3\setminus\closedball{0}{\varepsilon})}\bar\varphi_\varepsilon(x,T_x\Sigma_i)\innerproduct{\nu(\Sigma_i)}{e_l}\integrald\hausdorff^1x\\
                                 = & -\int_{L^+\setminus\closedball{0}{\varepsilon}}\bar\varphi_\varepsilon(x,T_x\Sigma_i)\innerproduct{\nu_i^+}{e_l}\integrald\hausdorff^1x\\
																   & \quad -\int_{L^-\setminus\closedball{0}{\varepsilon}}\bar\varphi_\varepsilon(x,T_x\Sigma_i)\innerproduct{\nu_i^-}{e_l}\integrald\hausdorff^1x.
\end{aligned}
\end{equation*}
By letting $\varepsilon$ tend to zero we have $\boundary_l(W_i,\varphi)=\lim_{\varepsilon\rightarrow 0}\boundary_l(W_i,\bar\varphi_\varepsilon)$, that is,
\begin{equation*}
\begin{aligned}
\boundary_l(W_i,\varphi) = & -\int_{L\cap(\real^3\setminus\{0\})}\varphi(x,T_x\Sigma_i)(\innerproduct{\nu_i^+}{e_l}+\innerproduct{\nu_i^-}{e_l})\integrald\hausdorff^1x\\
        								 = & -\int_{\totalgrassmannian{2}{\real^3}}\varphi(x,P)\partial_lW_i,
\end{aligned}
\end{equation*}
where $\partial_lW_i=\tangent{W_i}_\sharp(\innerproduct{\nu_i^+}{e_l}\hausdorff^1\restrict L^+ + \innerproduct{\nu_i^-}{e_l}\hausdorff^1\restrict L^-)$.
\end{proof}

\begin{remark}\label{remark not curvature}
Denote by $\pi:\totalgrassmannian{2}{D}\rightarrow\grassmannian{2}{3}$ the projection onto the Grassmannian space.
It follows that $\pi_\sharp\partial W_i=\nu_i^+\delta_{P_i^+}+\nu_i^-\delta_{P_i^-}$, where $\delta_{P_i^\pm}$ is the Dirac measure centered at $P_i^\pm$.
Therefore $\pi_\sharp\partial Z_k=\sum_{j=1}^3\nu_{2j-k}^+\delta_{P_{2j-k}^+}+\nu_{2j-k}^-\delta_{P_{2j-k}^-}\neq 0$ for $k=1,2$.
That is, $Z_1,Z_2$ are not curvature varifolds (without boundary).
\end{remark}

\begin{figure}[H]
\centering
  \begin{tikzpicture}[scale=0.8]

\draw[-triangle 60] (0,0) node (v1) {} -- (0,3.5);
\draw[-triangle 60] (-3.5,1) -- (3.5,-1);
\node [label={[label distance = -0.1 cm]45:$x_1$}] at (3.5,-1) {};
\node [label={[label distance = -0.1 cm]0:$x_3$}] at (0,3.5) {};
\draw [-triangle 60](-3.5,-4) -- (3,3.5);
\node [label={[label distance = -0.1 cm]90:$x_2$}] at (3,3.5) {};

\draw[blue,thick] (2.58,3) -- (5,4);
\draw[blue,thick] (0.85,1) -- (3.25,2);
\draw[blue,thick] (1.35,1.6) -- (3.78,2.6);
\draw[blue,thick] (1.95,2.3) -- (4.38,3.3);
\draw[blue,thick] (5,4) -- (3.25,2);
\draw[blue,thick] (0.4,0.5) -- (1.625,1);
\draw[blue,thick] (0.15,0.2) -- (0.65,0.4);
\draw[blue,thick] (3.62,3.42) -- (1.625,1);
\draw[blue,thick] (3.12,3.22) -- (0.65,0.4);

\draw[red,thick] (-0.9,-1) -- (1,-3);
\draw[red,thick] (-2.65,-3) -- (-0.75,-5);
\draw[red,thick] (1,-3) -- (-0.75,-5);
\draw[red,thick] (-2.02,-2.3) -- (-0.1,-4.27);
\draw[red,thick] (-1.42,-1.6) -- (0.48,-3.6);
\draw[red,thick] (-0.55,-0.57) -- (0.55,-1.65);
\draw[red,thick] (-0.26,-0.25) -- (0.25,-0.75);
\draw[red,thick] (0.55,-1.65) -- (-1.61,-4.11);
\draw[red,thick] (0.25,-0.75) -- (-2.2,-3.5);

\draw[orange,thick] (1,-3) .. controls (1.4,-2.5) and (1.2,-1.5) .. (1.5,-0.5);
\draw[orange,thick] (1,-0.3) .. controls (0.9,-0.7) and (0.9,-1.3) .. (0.55,-1.65);
\draw[orange,thick] (0.4,-0.1) .. controls (0.38,-0.3) and (0.4,-0.52) .. (0.25,-0.75);
\draw[cyan,thick] (1.5,-0.5) .. controls (1.8,0.5) and (2.8,1.5) .. (3.25,2);
\draw[cyan,thick] (1.625,1) .. controls (1.4,0.7) and (1.1,0.1) .. (1,-0.3);
\draw[cyan,thick] (0.65,0.4) .. controls (0.48,0.18) and (0.42,0.1) .. (0.4,-0.1);

\draw[dashed] (0,0) -- (0,-4);
\draw (0,-4) -- (0,-5);
\draw[dotted] (0,0) -- (0.25,-0.75) -- (0.55,-1.65) -- (1,-3);
\draw[dotted] (0,0) -- (0.65,0.4) -- (1.625,1) -- (3.25,2);
\node [label={[label distance = -0.1 cm]90:$W_1$}] at (0.9,2) {};

\node [label={[label distance = 0.1 cm]0:{\color{blue}$\Sigma_1^+\cap D^+$}}] at (4.4,3.3) {};
\node [label={[label distance = 0.1 cm]0:{\color{cyan}$\Sigma_1^+\cap C_1$}}] at (2.1,0.5) {};
\node [label={[label distance = 0.1 cm]0:{\color{orange}$\Sigma_1^-\cap C_1$}}] at (1.4,-1.5) {};
\node [label={[label distance = 0.1 cm]0:{\color{red}$\Sigma_1^-\cap D^-$}}] at (0.5,-3.6) {};
\end{tikzpicture}
  \caption*{Building block $W_1$ and partitioning of $\Sigma_1$.}
\label{fig:w1}
\end{figure}
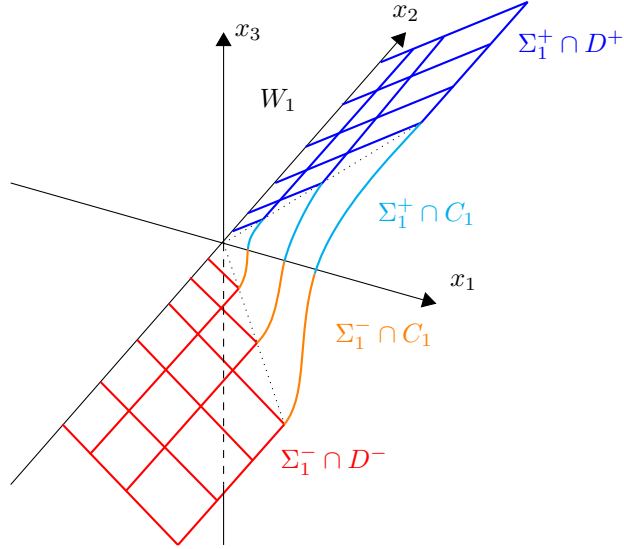

\begin{lemma}\label{distributional boundary pm}
The distributional boundary of $\Sigma_i^\pm$ with respect to $V$ is given by
\begin{equation*}
V\partial\Sigma_i^\pm(Y)=\int_{L^\pm}\innerproduct{Y(x)}{\nu_i^\pm}\integrald\hausdorff^1x+\int_{T_{\lceil \frac{i}{2} \rceil}}\innerproduct{Y(x)}{\eta_i^\pm}\integrald\hausdorff^1x,
\end{equation*}
for all $Y\in\smoothcompact{\real^3}{\real^3}$ and $i=1,\ldots,6$.
\end{lemma}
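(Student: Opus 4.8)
The plan is to unwind $V\partial\Sigma_i^\pm=(\delta V)\restrict\Sigma_i^\pm-\delta(V\restrict\Sigma_i^\pm)$ directly and to exploit that restriction to the relatively open piece $\Sigma_i^\pm\subseteq\Sigma_i$ — which misses all of the lines $L$ and $T$ — annihilates every contribution except the interior (mean‑curvature) part of $W_i$, which then cancels against the interior part of $\delta\graphv{\Sigma_i^\pm}$, leaving exactly the two conormal integrals.

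First I would check that $V\restrict\Sigma_i^\pm=\graphv{\Sigma_i^\pm}$. Since $V=\sum_{j=1}^6W_j$, this amounts to showing $\hausdorff^2(\Sigma_j\cap\Sigma_i^\pm)=0$ for $j\ne i$, which follows from the explicit picture already set up: in the region $D$ the surfaces are pieces of the planes $P_j^\pm$ (so two distinct ones meet in a line, and the coincidences $P_j^\pm=P_k^\pm$ from Remark \ref{vector identities} only pair up complementary half‑planes meeting along $L^\pm$), while the curved parts of the $\Sigma_j^\pm$ are confined to the disjoint cones $C_{\lceil \frac{j}{2} \rceil}$; in every case the overlap lies in $L\cup T$, which is disjoint from the relatively open set $\Sigma_i^\pm$. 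Hence $\mass{W_j}(\Sigma_i^\pm)=0$ for $j\ne i$, so $\delta(V\restrict\Sigma_i^\pm)=\delta\graphv{\Sigma_i^\pm}$.

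Next I would compute $\delta\graphv{\Sigma_i^\pm}$ by the divergence theorem, exactly as in Lemma \ref{graph varifolds} and Lemma \ref{w_i boundary}: $\Sigma_i^\pm$ is a smooth submanifold‑with‑boundary of $\real^3\setminus\{0\}$ with boundary $L^\pm\cup T_{\lceil \frac{i}{2} \rceil}$ and inward conormals $\nu_i^\pm$ and $\eta_i^\pm$ there, and its mean curvature vector $\mathbf h_i$ (the $\hausdorff^2\restrict\Sigma_i$‑absolutely continuous part of $\delta W_i$) satisfies $|\mathbf h_i|\le C|D^2g|$, hence is locally integrable by Remark \ref{function g remark}. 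Excising $\closedball{0}{\varepsilon}$ and letting $\varepsilon\to0$ (the integral over $\Sigma_i^\pm\cap\partial\closedball{0}{\varepsilon}$ is $O(\varepsilon)$ since $|Y|$ is bounded, just as in the proof of Lemma \ref{w_i boundary}) I expect to obtain
\begin{equation*}
\delta\graphv{\Sigma_i^\pm}(Y)=\int_{\Sigma_i^\pm}\innerproduct{Y}{\mathbf h_i}\integrald\hausdorff^2-\int_{L^\pm}\innerproduct{Y}{\nu_i^\pm}\integrald\hausdorff^1-\int_{T_{\lceil \frac{i}{2} \rceil}}\innerproduct{Y}{\eta_i^\pm}\integrald\hausdorff^1.
\end{equation*}
On the other side, $(\delta V)\restrict\Sigma_i^\pm$ should retain only $\mathbf h_i\mass{W_i}\restrict\Sigma_i^\pm$: writing $\delta W_j=\mathbf h_j\mass{W_j}-\nu_j^+\hausdorff^1\restrict L^+-\nu_j^-\hausdorff^1\restrict L^-$ (Lemma \ref{w_i boundary}), the line terms vanish because $\Sigma_i^\pm$ lies over $\{x_1>0\}$ and over $\{\pm x_2>0\}$, hence is disjoint from both $L$ and $T$, while the absolutely continuous terms with $j\ne i$ vanish because $\hausdorff^2(\Sigma_j\cap\Sigma_i^\pm)=0$. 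Subtracting the display above, the $\mathbf h_i$‑terms cancel and the asserted identity drops out.

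The only genuinely delicate point is the behaviour at the origin, where $g$ is merely Lipschitz and where $L$ together with all three half‑lines of $T$ accumulate, so the divergence theorem cannot be applied blindly on $\Sigma_i^\pm$; I would handle it exactly as in the proof of Lemma \ref{w_i boundary}, by excising a shrinking ball and controlling the error with the integrability of $|D^2g|$ near $0$ furnished by Lemma \ref{function g} and Remark \ref{function g remark}. Everything else is bookkeeping with the conormal orientations, which are fixed by the construction.
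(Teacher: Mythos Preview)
Your proposal is correct and follows essentially the same route as the paper's (very terse) proof: both rely on the local integrability of the generalised mean curvature of $V$ (coming from Lemma~\ref{function g} and Remark~\ref{function g remark}), the divergence theorem on the smooth piece $\Sigma_i^\pm$, and the cut-off argument near the origin already used in Lemma~\ref{w_i boundary}. The paper compresses your computation of $(\delta V)\restrict\Sigma_i^\pm$ and $\delta(V\restrict\Sigma_i^\pm)$ into a citation of \cite{allard1972}*{4.7}, but the content is the same; your explicit verification that $V\restrict\Sigma_i^\pm=\graphv{\Sigma_i^\pm}$ and that the singular parts of $\delta W_j$ are supported off $\Sigma_i^\pm$ is exactly what is implicitly needed there.
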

\begin{proof}
Note that Lemma \ref{function g} implies that the generalized mean curvature of $V$ is in $\lploc{1}(\real^3,\real^3;\mass{V})$ so the result follows from \cite{allard1972}*{4.7} and a similar cut-off function argument as in Lemma \ref{w_i boundary}.
\end{proof}

\begin{corollary}\label{distributional boundary total}
The distributional boundary of $\Sigma_i$ with respect to $V$ is given by
\begin{equation*}
V\partial\Sigma_i(Y)=\int_{L^+}\innerproduct{Y(x)}{\nu_i^+}\integrald\hausdorff^1x+\int_{L^-}\innerproduct{Y(x)}{\nu_i^-}\integrald\hausdorff^1x,
\end{equation*}
for all $Y\in\smoothcompact{\real^3}{\real^3}$ and $i=1,\ldots,6$.
\end{corollary}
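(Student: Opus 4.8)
The plan is to realize $\Sigma_i$ as the essentially disjoint union of its two graphical halves $\Sigma_i^+$, $\Sigma_i^-$ together with the separating ray $T_{\lceil i/2\rceil}$, to exploit additivity of the operator $E\mapsto V\partial E$, to insert Lemma \ref{distributional boundary pm} for each half, and finally to check that the two interface contributions along $T_{\lceil i/2\rceil}$ cancel.

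First I would record the set-theoretic decomposition. Since $g(x_1,0)=0$ by Lemma \ref{function g}(i), the graph $\Sigma_1$ over $\Omega=\{x_1>0\}$ is the disjoint union of $\Sigma_1^\pm$ (the graphs over $\Omega^\pm$) and $T_1=\{(x_1,0,0):x_1>0\}$; applying $\reflect$ and $\rot$, and noting that $\lceil i/2\rceil$ records which ray is produced, gives $\Sigma_i=\Sigma_i^+\cup\Sigma_i^-\cup T_{\lceil i/2\rceil}$ (a disjoint union) for every $i=1,\ldots,6$. Next I would observe that $T_{\lceil i/2\rceil}$ is $(\mass V+\mass{\delta V})$-null: being one-dimensional it is $\hausdorff^2$-null and hence $\mass{W_j}$-null for every $j$, while by Lemma \ref{w_i boundary} the only part of $\delta W_j$ that is not absolutely continuous with respect to $\mass{W_j}$ (its absolutely continuous part being controlled by $|D^2g|$ via Remark \ref{Aijk remark} and Lemma \ref{function g}) is supported on $L^\pm$, which lies on the $x_2$-axis and is therefore disjoint from every ray $T_k$; summing over $j$ yields $\mass{\delta V}(T_{\lceil i/2\rceil})=0$. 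Since $(\delta V)\restrict(\cdot)$ and $V\restrict(\cdot)$ — hence $\delta(V\restrict(\cdot))$ by linearity of the first variation — are additive over disjoint measurable sets, the definition of $V\partial$ immediately gives $V\partial\Sigma_i=V\partial\Sigma_i^++V\partial\Sigma_i^-$.

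It then remains to add the two identities furnished by Lemma \ref{distributional boundary pm}: the terms over $L^+$ and over $L^-$ each appear exactly once and reproduce the asserted right-hand side, while the two integrals over $T_{\lceil i/2\rceil}$ combine into $\int_{T_{\lceil i/2\rceil}}\innerproduct{Y(x)}{\eta_i^++\eta_i^-}\integrald\hausdorff^1x$. Thus the proof concludes once $\eta_i^++\eta_i^-=0$, which is immediate from the definitions: $\eta_1^\pm=\pm\sqrt{\tfrac{3}{7}}(0,1,\tfrac{2}{\sqrt{3}})$ and $\eta_2^\pm=\pm\sqrt{\tfrac{3}{7}}(0,1,-\tfrac{2}{\sqrt{3}})$ are manifestly opposite, and $\eta_i^++\eta_i^-=\rot(\eta_{i-2}^++\eta_{i-2}^-)=0$ for $i=3,\ldots,6$ by induction. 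Geometrically this cancellation just reflects that $\Sigma_i$ is a smooth surface across $T_{\lceil i/2\rceil}$, so the inward conormals of its two complementary halves are negatives of one another.

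The argument is essentially bookkeeping; the one point deserving genuine care is the claim that $T_{\lceil i/2\rceil}$ carries no mass of $\delta V$, since a priori the first variation could acquire a singular contribution along that ray — this is precisely what the structure of $\delta W_j$ from Lemma \ref{w_i boundary} (an absolutely continuous part plus a boundary part living on the $x_2$-axis) rules out. Everything else — the disjoint decomposition of $\Sigma_i$, the additivity of $V\partial$, and the conormal cancellation — is routine.
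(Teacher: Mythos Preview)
Your proof is correct and follows the same approach as the paper, which simply records that the corollary ``follows directly from the above result and $\eta_i^+=-\eta_i^-$''; you have merely spelled out the additivity step (via the $(\mass V+\mass{\delta V})$-nullity of $T_{\lceil i/2\rceil}$) that the paper leaves implicit. One small imprecision: when arguing that $\mass{\delta V}(T_{\lceil i/2\rceil})=0$ you cite Lemma~\ref{w_i boundary}, which concerns the curvature boundary $\partial W_j$ rather than the first variation $\delta W_j$; the claim is nonetheless true and follows directly from the smooth divergence theorem on $\Sigma_j$ (as in the proof of Lemma~\ref{distributional boundary pm}), which shows $\delta W_j$ consists of a mean-curvature part absolutely continuous with respect to $\mass{W_j}$ plus a boundary term supported on $L$.
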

\begin{proof}
Follows directly from the above result and $\eta_i^+=-\eta_i^-$.
\end{proof}

\begin{lemma}\label{constancy}
Let $X\in\varifolds_2(\real^3)$ be a component of $V$ and $i\in\{1,\ldots,6\}$.
If $\mass{X}(\Sigma_i^\pm)>0$, then $\Sigma_i^\pm\subset\support{\mass{X}}$.
\end{lemma}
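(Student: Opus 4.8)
The statement to prove is Lemma \ref{constancy}: if $X$ is a component of $V$ and $\mass{X}(\Sigma_i^\pm) > 0$, then $\Sigma_i^\pm \subset \support{\mass{X}}$. Since $X$ is a component, by definition there is a $\mass V + \mass{\delta V}$-measurable set $E \subset \real^3$ with $V\partial E = 0$ and $X = V\restrict E$. The key structural fact is that $\mass V$ restricted to each $\Sigma_i^\pm$ is (essentially) a multiplicity-one piece of a smooth surface, so the question reduces to showing that $E$ cannot "split" $\Sigma_i^\pm$ into a positive-measure part and a positive-measure complement. This is a constancy-type argument: on the smooth open surface $\Sigma_i^\pm$, the characteristic function $\mathbbm 1_E$ must be $\mass V$-a.e. constant because $V\partial E = 0$ forces it to have no distributional derivative along $\Sigma_i^\pm$.

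**Main steps.** First I would fix $i$ and the sign, say $\Sigma_i^+$, and let $E$ be the associated set with $V\partial E = 0$. I would test the identity $V\partial E = 0$ against vector fields $Y \in \smoothcompact{\real^3}{\real^3}$ supported in a small ball around an interior point $x_0 \in \Sigma_i^+$ that meets none of the other sheets $\Sigma_j^\pm$ and neither boundary locus $L^\pm$ nor $T_{\lceil i/2 \rceil}$ (such neighborhoods exist since $\Sigma_i^+$ is relatively open in the smooth surface $\Sigma_i$ and the sheets only intersect along $T$ and $L$). On such a ball, $V$ coincides with $\graphv{\Sigma_i^+}$, which by Lemma \ref{graph varifolds} is a curvature varifold with boundary whose boundary measure vanishes on the interior; in particular $\delta(\graphv{\Sigma_i^+})$ is represented by the (smooth, $\lploc 1$) generalized mean curvature there. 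Writing out $V\partial E = (\delta V)\restrict E - \delta(V\restrict E) = 0$ against $Y$ then yields, after subtracting the mean-curvature terms that appear on both sides, that $\int_{\Sigma_i^+} \mathbbm 1_E\, \mathrm{div}_{\Sigma_i^+} Y \, \mathrm d\hausdorff^2 = 0$ for all such $Y$, i.e. $\mathbbm 1_E$ has vanishing tangential distributional gradient on the connected smooth surface $\Sigma_i^+$. Hence $\mathbbm 1_E = $ const $\hausdorff^2$-a.e. on $\Sigma_i^+$. Since $\mass X(\Sigma_i^+) = \mass V(E \cap \Sigma_i^+) > 0$, that constant is $1$, so $\hausdorff^2$-a.e. point of $\Sigma_i^+$ lies in $E$, and therefore $\Sigma_i^+ \subset \support{\mass{X}} = \support{\mass{V\restrict E}}$, since $\mass V$ has full support on $\Sigma_i^+$ (density $1$ everywhere).

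**The main obstacle.** The delicate point is handling the interaction with the other sheets of $V$ and with the boundary loci $L^\pm, T_k$: I must be careful that the "constancy on $\Sigma_i^+$" conclusion is not contaminated by contributions of the other $\Sigma_j^\pm$ passing through, or by the conormal terms from Lemma \ref{distributional boundary pm} / Corollary \ref{distributional boundary total}. The clean way around this is precisely to localize to balls centered at interior points of $\Sigma_i^+$ disjoint from all of $T = \cup T_k$, $L = L^+ \cup L^-$, and the other sheets; there $V = \graphv{\Sigma_i^+}$ exactly and the argument is a textbook constancy theorem for sets of locally finite perimeter on a connected smooth hypersurface. Then one propagates: the set of points of $\Sigma_i^+$ with such a good neighborhood is itself open, connected, and of full $\hausdorff^2$-measure in $\Sigma_i^+$ (its complement being contained in the lower-dimensional loci where sheets intersect, intersected with $\Sigma_i^+$, which has $\hausdorff^2$-measure zero), so the a.e.-constant value is global on $\Sigma_i^+$. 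A secondary, more routine point is the passage from "$\mathbbm 1_E = 1$ $\hausdorff^2$-a.e." to the topological statement $\Sigma_i^+ \subset \support{\mass X}$, which follows because $\mass{V}\restrict \Sigma_i^+ = \hausdorff^2 \restrict \Sigma_i^+$ and $\support$ of this restricted measure is all of $\overline{\Sigma_i^+} \supset \Sigma_i^+$.
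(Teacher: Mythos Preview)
Your proposal is correct and follows essentially the same route as the paper: both arguments reduce to a constancy statement for $\mathbbm 1_E$ on the smooth connected surface $\Sigma_i^\pm$, using that $V\partial E=0$ and that the other sheets do not meet $\Sigma_i^\pm$ (equivalently, $\support(V-\graphv{\Sigma_i^\pm})\subset(\real^3\setminus\Sigma_i^\pm)\times\grassmannian{2}{3}$). The only difference is packaging: the paper invokes the Constancy Lemma of Menne--Scharrer \cite{menne-scharrer2022:arxiv}*{Lemma 6.1} as a black box, whereas you reprove that lemma in this special case by localizing to good balls and reading off that $\mathbbm 1_E$ has vanishing tangential distributional gradient.
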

\begin{proof}
Let $E\subset\real^{3}$ be a $\mass{V}+\mass{\delta V}$-measurable set with $V\partial E=0$ and $X=V\restrict E$.
Suppose by contradiction that $\mass{X}(\Sigma_i^+)>0$ and $\Sigma_i^+\setminus\support{\mass{X}}\neq\emptyset$.
Then there exists $x\in\Sigma_i^+$ and $\varepsilon>0$ such that $\mass{X}(\openball{x}{\varepsilon})=0$, which implies that $\mass{V}(\openball{x}{\varepsilon}\cap E)=0$.
Hence, $\mass{V}(\Sigma_i^+\setminus E)\geq\mass{V}((\Sigma_i^+\setminus E)\cap\openball{x}{\varepsilon})=\mass{V}(\Sigma_i^+\cap\openball{x}{\varepsilon})>0$.

Since $\Sigma_i^+$ is a regular surface, $\support{(V-\graphv{\Sigma_i^+)}}\subset (\real^3\setminus\Sigma_i^+)\times\grassmannian{2}{3}$ and $V\partial E=0$, then the above contradicts the Constancy Lemma \cite{menne-scharrer2022:arxiv}*{Lemma 6.1}. Similarly we obtain a contradiction in the case of $\Sigma_i^-$, which concludes the proof.
\end{proof}

\begin{lemma}\label{plus implies minus}
Let $X\in\varifolds_2(\real^3)$ be a component of $V$ and $i\in\{1,\ldots,6\}$.
Then $\mass{X}(\Sigma_i^+)>0$ if and only if $\mass{X}(\Sigma_i^-)>0$.
In particular, if either $\mass{X}(\Sigma_i^+)>0$ or $\mass{X}(\Sigma_i^-)>0$, then $\Sigma_i\subset\support{\mass{X}}$.
\end{lemma}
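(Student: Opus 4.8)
The plan is to work locally near the curve $T_{\lceil i/2\rceil}$ along which $\Sigma_i^+$ and $\Sigma_i^-$ are glued into the single smooth surface $\Sigma_i$, and to read off from $V\partial E=0$ that the set $E$ defining the component cannot separate these two halves. Let $E$ be a $\mass{V}+\mass{\delta V}$-measurable set with $V\partial E=0$ and $X=V\restrict E$. By the Constancy Lemma \cite{menne-scharrer2022:arxiv}*{Lemma 6.1} used in the proof of Lemma~\ref{constancy} (applied, for each $j$ and each sign, to the connected regular surface $\Sigma_j^\pm$, using $\graphv{\Sigma_j^\pm}\le V$ and $\support(V-\graphv{\Sigma_j^\pm})\subset(\real^3\setminus\Sigma_j^\pm)\times\grassmannian{2}{3}$) one has $\mass{V}(E\cap\Sigma_j^\pm)=0$ or $\mass{V}(\Sigma_j^\pm\setminus E)=0$; set $\epsilon_j^\pm\in\{0,1\}$ to be $0$ in the first case and $1$ in the second, so that $\mass{X}(\Sigma_j^\pm)>0$ if and only if $\epsilon_j^\pm=1$ (recall $\mass{V}(\Sigma_j^\pm)\ge\hausdorff^2(\Sigma_j^\pm)>0$). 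Thus the assertion reduces to showing $\epsilon_i^+=\epsilon_i^-$.

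Put $k=\lceil i/2\rceil$ and let $i'$ be such that $\{i,i'\}=\{2k-1,2k\}$, so $\Sigma_i\cap\Sigma_{i'}=T_k$. I would fix the point $p=t_k\in T_k$ and a radius $\rho>0$ so small that $B:=\openball{p}{\rho}$ satisfies: $B\cap(L^+\cup L^-)=\emptyset$; $B$ meets $\Sigma_j$ only for $j\in\{2k-1,2k\}$; and inside $B$ the surfaces $\Sigma_{2k-1},\Sigma_{2k}$ are smooth and cross transversally precisely along $T_k\cap B$. This is the routine geometric input to be checked: the second fundamental form of each $\Sigma_j$ is supported in the cone $C_{\lceil j/2\rceil}$ (Remark~\ref{function g remark}), the cones $C_1,C_2,C_3$ are pairwise disjoint, and an elementary computation with $\rot$ and $\reflect$ shows that, away from the origin, only $\Sigma_{2k-1}$ and $\Sigma_{2k}$ approach $T_k$ (for $p=t_k$ one finds $\distance(p,\Sigma_j)\ge\tfrac12$ for the remaining four indices $j$), while transversality of $\Sigma_{2k-1}$ and $\Sigma_{2k}$ along $T_k$ is the observation recorded just before Lemma~\ref{distributional boundary pm}. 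Hence in $B$ one has $\mass{V}\restrict B=\hausdorff^2\restrict(\Sigma_{2k-1}\cup\Sigma_{2k})\restrict B$ and, as in the proof of Lemma~\ref{distributional boundary pm}, $\delta V\restrict B=-\vec H\,\mass{V}\restrict B$ with $\vec H\in\lploc1$ the honest mean curvature of the two smooth sheets.

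The main step is a direct computation of $V\partial E$ inside $B$. For $Y\in\smoothcompact{B}{\real^3}$ we have $V\partial E(Y)=(\delta V)(1_EY)-\delta(V\restrict E)(Y)$, where $(\delta V)(1_EY)=-\int_{(\Sigma_i\cup\Sigma_{i'})\cap E}\vec H\cdot Y\,\integrald\hausdorff^2$. Now $V\restrict E\restrict B$ is, up to $\mass{V}$-null sets, the sum of those of the four smooth pieces $\Sigma_{2k-1}^\pm\cap B$, $\Sigma_{2k}^\pm\cap B$ selected by the $\epsilon$'s; applying the divergence theorem on each piece, whose only boundary inside $B$ lies along $T_k$ with inward conormal $\eta_j^\pm$ (and $\eta_j^-=-\eta_j^+$), one obtains
\begin{equation*}
\delta(V\restrict E)(Y)=-\int_{(\Sigma_i\cup\Sigma_{i'})\cap E}\vec H\cdot Y\,\integrald\hausdorff^2-\sum_{j\in\{i,i'\}}(\epsilon_j^+-\epsilon_j^-)\int_{T_k\cap B}\innerproduct{Y}{\eta_j^+}\,\integrald\hausdorff^1 .
\end{equation*}
The mean-curvature integrals cancel, so $V\partial E=0$ forces $(\epsilon_i^+-\epsilon_i^-)\eta_i^++(\epsilon_{i'}^+-\epsilon_{i'}^-)\eta_{i'}^+=0$ (take $Y$ concentrating near a point of $T_k\cap B$). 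Because $\Sigma_i$ and $\Sigma_{i'}$ meet transversally along $T_k$, the conormals $\eta_i^+$ and $\eta_{i'}^+$ are linearly independent — explicitly $\eta_1^+$ and $\eta_2^+$ are the non-parallel vectors listed above, and the case of general $k$ follows by applying $\rot$ — hence $\epsilon_i^+=\epsilon_i^-$ (and, incidentally, $\epsilon_{i'}^+=\epsilon_{i'}^-$). This gives $\mass{X}(\Sigma_i^+)>0$ if and only if $\mass{X}(\Sigma_i^-)>0$.

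Finally, if $\mass{X}(\Sigma_i^+)>0$ then also $\mass{X}(\Sigma_i^-)>0$, so by Lemma~\ref{constancy} both $\Sigma_i^+$ and $\Sigma_i^-$ lie in the closed set $\support\mass{X}$; since $\Sigma_i=\Sigma_i^+\cup T_k\cup\Sigma_i^-$ with $T_k\subset\overline{\Sigma_i^+}$, we conclude $\Sigma_i\subset\support\mass{X}$. I expect the only genuine work to be the bookkeeping in the $V\partial E$ computation — getting the signs of the conormal boundary terms right and verifying the clean cancellation of the mean-curvature terms — together with the elementary but slightly tedious check that a ball about a point of $T_k$ can be chosen to meet only the two relevant sheets; once these are in place, linear independence of $\eta_i^+$ and $\eta_{i'}^+$ closes the argument immediately.
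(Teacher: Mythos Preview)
Your argument is correct and follows the same strategy as the paper's proof: localise near $T_k$ (you use a ball about $t_k$, the paper uses the cone $C_k$), compute $V\partial E$ there from the divergence theorem on the half-sheets (the paper packages this step as Lemma~\ref{distributional boundary pm}), and conclude from the non-cancellation of the conormals $\eta_j^\pm$. The only cosmetic difference is that the paper argues by contradiction and runs a short case analysis on which of $\Sigma_i^\pm,\Sigma_{i'}^\pm$ lie in $E$, checking in each case that the resulting boundary term along $T_k$ is nonzero, whereas you extract the single relation $(\epsilon_i^+-\epsilon_i^-)\eta_i^++(\epsilon_{i'}^+-\epsilon_{i'}^-)\eta_{i'}^+=0$ and finish at once by the linear independence of $\eta_i^+,\eta_{i'}^+$.
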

\begin{proof}
Let $E\subset\real^{3}$ be a $\mass{V}+\mass{\delta V}$-measurable set with $V\partial E=0$ and $X=V\restrict E$.
Suppose by contradiction that $\mass{X}(\Sigma_i^+)>0$ and $\mass{X}(\Sigma_i^-)=0$.
It follows from Lemma \ref{constancy} that $\Sigma_i^+\subset\support{\mass{X}}\subset E$, hence $\support{\mass{X}}\cap C_{\lceil\frac{i}{2}\rceil}\neq\emptyset$.

Let $J=\{j\in\{1,\ldots,6\}:\mass{X}(\Sigma_j\cap C_{\lceil\frac{i}{2}\rceil})>0\}$.
Observe that $\support{\mass{V}}\cap C_{k}=(\Sigma_{2k-1}\cup\Sigma_{2k})\cap C_k$ for $k=1,2,3$, that is, $\cardinality{J}\leq 2$.
In particular, if $i$ is even then either $J=\{i\}$ or $J=\{i,i-1\}$ and if $i$ is odd then either $J=\{i\}$ or $J=\{i,i+1\}$.

Without loss of generality we may assume that $i$ is odd.
If $J=\{i\}$ then $\mass{X}((E\setminus\Sigma_i^+)\cap C_{\lceil\frac{i}{2}\rceil})=0$ by the contradiction assumption and from Lemma \ref{distributional boundary pm} we have
\begin{equation*}
V\partial E (Y)=\int_{T_{\lceil \frac{i}{2} \rceil}}\innerproduct{Y(x)}{\eta_i^+}\integrald\hausdorff^1x,
\end{equation*}
for every vector field $Y$ compactly supported in $C_{\lceil\frac{i}{2}\rceil}$.
This contradicts $V\partial E=0$ with a suitable choice of $Y$.

Now, suppose $J=\{i,i+1\}$ at least one of the following must hold: $\mass{X}(\Sigma_{i+1}^+)>0$ or $\mass{X}(\Sigma_{i+1}^-)>0$.
If both are true then Lemma \ref{constancy} implies that $\mass{X}((E\setminus(\Sigma_i^+\cup\Sigma_{i+1}^+\cup\Sigma_{i+1}^-))\cap C_{\lceil \frac{i}{2} \rceil})=0$.
We obtain the same contradiction as above, since $\eta_{i+1}^++\eta_{i+1}^-=0$.
If only one holds, say $\mass{X}(\Sigma_{i+1}^+)>0$, then we have $\mass{X}((E\setminus(\Sigma_i^+\cup\Sigma_{i+1}^+))\cap C_{\lceil \frac{i}{2} \rceil})=0$ and
\begin{equation*}
V\partial E (Y)=\int_{T_{\lceil \frac{i}{2} \rceil}}\innerproduct{Y(x)}{\eta_i^++\eta_{i+1}^+}\integrald\hausdorff^1x,
\end{equation*}
for every vector field $Y$ compactly supported in $C_{\lceil\frac{i}{2}\rceil}$.
Since $\eta_i^++\eta_{i+1}^+\neq 0$ we again obtain a contradiction with $V\partial E=0$ and conclude the proof.
\end{proof}

\begin{lemma}\label{component dichotomy}
Let $X\in\varifolds_2(\real^3)$ be a component of $V$ and $i\in\{1,\ldots,6\}$.
Suppose $\mass{X}(\Sigma_i)>0$, then either:
\begin{enumerate}[(i)]
\item $i$ is odd and $\mass{X}(\Sigma_j)>0$ for all $j\in\{1,3,5\}$ or
\item $i$ is even and $\mass{X}(\Sigma_j)>0$ for all $j\in\{2,4,6\}$.
\end{enumerate}
\end{lemma}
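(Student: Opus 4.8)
The plan is to show that if $X=V\restrict E$ is a component of $V$, then the set of sheets $\Sigma_j$ carrying positive $\mass{X}$-mass is forced, by the vanishing of $V\partial E$ along the axis $L$, to contain one of the two triples $\{\Sigma_1,\Sigma_3,\Sigma_5\}$ or $\{\Sigma_2,\Sigma_4,\Sigma_6\}$.

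Since the group generated by $\rot$ and $\reflect$ preserves $V$, sends components to components, and acts transitively on $\{\Sigma_1,\dots,\Sigma_6\}$, I would first reduce to the case $i=1$ and aim for conclusion (i). Write $E\subset\real^3$ for a $\mass{V}+\mass{\delta V}$-measurable set with $V\partial E=0$ and $X=V\restrict E$, and set $J=\{j:\mass{X}(\Sigma_j^+)>0\}$. By Lemma \ref{plus implies minus} one has $J=\{j:\mass{X}(\Sigma_j^-)>0\}$, and since $\Sigma_1=\Sigma_1^+\cup\Sigma_1^-$ up to a $\mass{V}$-null set, $\mass{X}(\Sigma_1)>0$ forces $1\in J$. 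Next, invoking Lemma \ref{constancy} — or rather the Constancy Lemma it relies on, together with the fact that $\mass{V}$ has density one at $\hausdorff^2$-almost every point of each $\Sigma_j^\pm$ (the $\Sigma_i$ overlap only on $\hausdorff^2$-null sets) — I would conclude that for $j\in J$ the surface $\Sigma_j^\pm$ is contained in $E$ up to a $\mass{V}$-null set, while for $j\notin J$ the set $E\cap\Sigma_j^\pm$ is $\mass{V}$-null.

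The key step is then to localise near $L$. I would first note that $\mass{\delta V}$ is absolutely continuous with respect to $\mass{V}$: the boundary contributions of the $W_i$ along $L$ and along the $T_k$ cancel because $\sum_{i=1}^6\nu_i^\pm=0$ and $\eta_i^+=-\eta_i^-$, so $\delta V$ reduces to its $\lploc{1}$ mean-curvature part. Fixing $p\in L^+\setminus\{0\}$ and a ball $B=\openball{p}{\rho}$ disjoint from the origin and from every $T_k$, the set $E\cap B$ agrees, up to a $(\mass{V}+\mass{\delta V})$-null set, with $\bigcup_{j\in J}\Sigma_j^+\cap B$; since $V\partial E$ restricted to $B$ depends only on $E\cap B$ and $V\restrict B$, Lemma \ref{distributional boundary pm} (whose $T_k$-term is absent on $B$) gives
\begin{equation*}
0=V\partial E(Y)=\int_{L^+}\Big\langle\, Y(x),\,\textstyle\sum_{j\in J}\nu_j^+\,\Big\rangle\integrald\hausdorff^1x\qquad\text{for all }Y\in\smoothcompact{B}{\real^3},
\end{equation*}
hence $\sum_{j\in J}\nu_j^+=0$; running the same argument near a point of $L^-$ yields $\sum_{j\in J}\nu_j^-=0$.

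Finally I would carry out the combinatorial step, which I expect to be the heart of the matter: the only subsets $J\ni 1$ of $\{1,\dots,6\}$ satisfying $\sum_{j\in J}\nu_j^+=\sum_{j\in J}\nu_j^-=0$ are $\{1,3,5\}$ and $\{1,\dots,6\}$. A short computation from the definitions gives $\nu_j^++\nu_j^-=\sqrt3\,t_{\lceil j/2\rceil}$, so that $\sum_{j\in J}(\nu_j^++\nu_j^-)=\sqrt3\sum_{k=1}^3 c_k t_k$ with $c_k=\cardinality{(J\cap\{2k-1,2k\})}$; since $t_1+t_2+t_3=0$ and $t_1,t_2$ are linearly independent, this forces $c_1=c_2=c_3=:c$, and $c\neq0$ because $1\in J$. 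If $c=2$ then $J=\{1,\dots,6\}\supseteq\{1,3,5\}$; if $c=1$ then $J=\{1,x,y\}$ with $x\in\{3,4\}$ and $y\in\{5,6\}$, and imposing in addition $\sum_{j\in J}\nu_j^+=0$ eliminates all choices of $(x,y)$ except $(3,5)$ (using the explicit conormals, or equivalently the identities of Remark \ref{vector identities}). Either way $\{1,3,5\}\subseteq J$, i.e. $\mass{X}(\Sigma_j)>0$ for $j\in\{1,3,5\}$, which is (i); the even case (ii) follows from the symmetry reduction. The main obstacle is the clean passage from $V\partial E=0$ to $\sum_{j\in J}\nu_j^\pm=0$ — the absolute continuity of $\delta V$ and the locality of the distributional boundary — after which only a finite vector computation remains.
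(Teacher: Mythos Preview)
Your argument is correct and follows the same overall architecture as the paper: reduce by symmetry to $i=1$, use Lemma~\ref{plus implies minus} and the Constancy Lemma to show that $E$ agrees (up to $\mass{V}+\mass{\delta V}$-null sets) with $\bigcup_{j\in J}\Sigma_j$, apply Corollary~\ref{distributional boundary total} to obtain $\sum_{j\in J}\nu_j^{+}=\sum_{j\in J}\nu_j^{-}=0$, and finish by combinatorics on the conormals. Your observation that $\mass{\delta V}$ is absolutely continuous with respect to $\mass{V}$ (because $\sum_i\nu_i^{\pm}=0$ kills the singular boundary parts) is exactly what the paper uses implicitly when it invokes the $\lploc{1}$ mean curvature in the proof of Lemma~\ref{distributional boundary pm}.

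Where you genuinely diverge is in the combinatorial step. The paper argues by contradiction and runs through two Claims, enumerating the possible index sets $J$ case by case and eliminating each with an ad hoc inner-product check. Your route is more structural: from $\nu_j^{+}+\nu_j^{-}=\sqrt{3}\,t_{\lceil j/2\rceil}$ and $t_1+t_2+t_3=0$ you extract immediately that $c_1=c_2=c_3$, reducing the problem to four candidate sets, of which a single use of $\sum_{j\in J}\nu_j^{+}=0$ isolates $\{1,3,5\}$ (and $\{1,\ldots,6\}$). This buys a shorter, coordinate-light argument and makes the role of the threefold symmetry transparent; the paper's enumeration, on the other hand, is entirely self-contained and does not require spotting the identity $\nu_j^{+}+\nu_j^{-}=\sqrt{3}\,t_{\lceil j/2\rceil}$.
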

\begin{proof}
Let $E\subset\real^3$ be the $\mass{V}+\mass{\delta V}$-measurable set with $V\partial E=0$ that defines $X$.

Without loss of generality we may assume $\mass{X}(\Sigma_1)>0$.
It follows from Lemma \ref{plus implies minus} that $\Sigma_1\subset E$.

Define $J=\{j\in\{1,\ldots,6\}:\mass{X}(\Sigma_j)>0\}$ so that $1\in J$ by assumption and note that Lemma \ref{plus implies minus} implies $\mass{X}(E\setminus \cup_{j\in J}\Sigma_j)=0$.
Thus, from Corollary \ref{distributional boundary total} we have
\begin{equation*}
0=V\partial E(Y)=\int_{L^+}\innerproduct{Y(x)}{\sum_{j\in J}\nu_j^+}\integrald\hausdorff^1x+\int_{L^-}\innerproduct{Y(x)}{\sum_{j\in J}\nu_j^-}\integrald\hausdorff^1x,
\end{equation*}
for all $Y\in\smoothcompact{\real^3}{\real^3}$.
Therefore $\sum_{j\in J}\nu_j^+=\sum_{j\in J}\nu_j^-=0$ and we may assume that $J\neq\{1\}$, otherwise we obtain a contradiction.

Now, suppose by contradiction that at least one of the following happens: $3\not\in J$ or $5\not\in J$.
We will consider every possible configuration of $J$ and obtain a contradiction in each case.

\begin{claim}
We must have $\mass{X}(\Sigma_6^+)>0$ and $\mass{X}(\Sigma_4^-)>0$.
\end{claim}
In fact, suppose $\mass{X}(\Sigma_6^+)=0$, so Lemma \ref{plus implies minus} implies $6\not\in J$.
If neither $3,5\not\in J$ then the only possibilities left for $J$ are $\{1,2\}$, $\{1,4\}$ or $\{1,2,4\}$.
Since $\innerproduct{\nu_1^+}{\nu_k^+}>0$ for $k=2,4$ and $\nu_2^++\nu_4^+\neq 0$ it contradicts $\sum_{j\in J}\nu_j^+\neq 0$.
Similarly suppose $3\in J$ and $5\not\in J$, so the possibilities are $\{1,3,2\}$, $\{1,3,4\}$ or $\{1,3,2,4\}$ and note:
\begin{itemize}
\item if $2\in J$ then we have a contradiction since $\nu_2^++\nu_3^+=0$ and $\nu_1^++\nu_4^+\neq 0$;
\item if $2\not\in J$ then we also produce a contradiction from $\innerproduct{\nu_4^+}{\nu_k^+}>0$ for $k=1,3$ and $\nu_1^++\nu_3^+\neq 0$.
\end{itemize}
Alternatively, we may suppose $3\not\in J$ and $5\in J$ to obtain the same contradiction.

Arguing as above but contradicting $\sum_{j\in J}\nu_j^-=0$, we obtain $\mass{X}(\Sigma_4^-)>0$, which concludes the proof of the claim.

It follows from Lemma \ref{plus implies minus} that in fact we must have $4,6\in J$.

\begin{claim}
We must have $\mass{X}(\Sigma_2^+)>0$.
\end{claim}
Suppose by contradiction that $\mass{X}(\Sigma_2^+)=0$, so Lemma \ref{plus implies minus} implies $2\not\in J$.
We are assuming that $\{3,5\}\not\subset J$ and we already know $\{4,6\}\subset J$ so the only remaining possibilities for $J$ are $\{1,4,6\}$, $\{1,3,4,6\}$ or $\{1,5,4,6\}$.
All three cases contradict $\sum_{j\in J}\nu_j^+= 0$, which proves the claim.

Once again, Lemma \ref{plus implies minus} implies $2\in J$. 
Finally, the only remaining possibilities for $J$ are $\{1,2,4,6\}$, $\{1,3,2,4,6\}$ or $\{1,5,2,4,6\}$, all of which contradict $\sum_{j\in J}\nu_j^+= 0$ since $\nu_2^++\nu_4^++\nu_6^+=0$, $\nu_1^++\nu_3^+\neq 0$ and $\nu_1^++\nu_5^+\neq 0$.
This concludes the proof for $i=1$.

The assumption of $i=1$ was arbitrary and a similar proof can be repeated with $i=3$ or $i=5$.
The even case also follows the same argument.
\end{proof}

\begin{remark}\label{z indecomposable}
It follows from the proofs of Lemmas \ref{plus implies minus} and \ref{component dichotomy} that $Z_1$ and $Z_2$ are indecomposable.
Indeed, suppose $X$ is a component of $Z_1$, in which case we may assume without loss of generality that $\mass{X}(\Sigma_1^+)>0$.
Following the same argument as in Lemma \ref{plus implies minus} we have that $\Sigma_1\subset\support{\mass{X}}$.
Similarly, following the arguments of Lemma \ref{component dichotomy}(a) we have that $\Sigma_3\subset\support{\mass{X}}$ and $\Sigma_5\subset\support{\mass{X}}$, which implies that $X=Z_1$ and proves that $Z_1$ is indecomposable.
The same holds for $Z_2$.
Therefore, $\Xi=\{Z_1,Z_2\}$ is a decomposition of $V$.
\end{remark}

\begin{lemma}\label{planar support}
Let $X\in\varifolds_2(\real^3)$ be a component of $V$.
Suppose $X$ is a curvature varifold (without boundary), then the following statements hold for any $i=1,\ldots,6$:

If $\mass{X}(\Sigma_i^\pm)>0$ then $\mass{X}(\Sigma_{j^\pm(i)}^\pm)>0$, where $j^\pm(i)\in\{1,\ldots,6\}$ is such that $\nu_i^\pm+\nu_{j^\pm(i)}^\pm=0$.
\end{lemma}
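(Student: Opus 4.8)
The plan is to argue in a neighbourhood of a point of the free boundary line $L^+$ that lies in the planar region $D$; there $V$, and hence the component $X$, is a finite union of \emph{flat} half-planes all having $L^+$ as common edge, so that the curvature varifold equation for $X$ degenerates --- and, decisively, it degenerates not to a single balance of conormals along $L^+$ but to one balance \emph{for each value of the tangent plane}, which is exactly what forces the pairing. The superscript $-$ case is symmetric, worked near $L^-$ using the minus-identities of Remark~\ref{vector identities}.

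So fix $i$ with $\mass X(\Sigma_i^+)>0$, write $X=V\restrict E$ with $V\partial E=0$, and set $S=\{j:\mass X(\Sigma_j^+)>0\}$, so that $i\in S$. Pick $p=x_2^0e_2\in L^+$ with $x_2^0>0$; since $L^+\setminus\{0\}\subset D$ (the cone axes $t_1,t_2,t_3$ being orthogonal to $e_2$), there is $\rho>0$ with $N:=\openball{p}{\rho}\subset D^+$. Using $\Sigma_j^\pm\cap D=P_j^\pm\cap\{x\in D^\pm:\innerproduct{x}{\nu_j^\pm}>0\}$, inside $N$ each $\Sigma_j$ equals the half of the plane $P_j^+$ on one side of $L^+$, and by Remark~\ref{vector identities} these six half-planes join along $L^+$ to form the three \emph{distinct} planes $P_1^+,P_3^+,P_5^+$. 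Applying the Constancy Lemma to each regular surface $\Sigma_j^+$ exactly as in the proof of Lemma~\ref{constancy}, together with Lemma~\ref{plus implies minus}, gives that $E\cap N$ agrees $\mass V$-almost everywhere with $\bigcup_{j\in S}(\Sigma_j^+\cap N)$; hence $X\restrict N=\sum_{j\in S}\graphv{\Sigma_j^+}\restrict N$.

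Next I would extract the Grassmannian balance. Since $X$ restricted to $N\setminus L^+$ is a finite union of pairwise disjoint flat half-planes with multiplicity one and $L^+$ is $\mass X$-null, the weak second fundamental form of the curvature varifold $X$ vanishes $\mass X$-almost everywhere in $N$ (on the $C^2$ part of an integral curvature varifold the weak second fundamental form agrees with the classical one; \cite{hutchinson1986}, and \cite{menne2016.1}*{Theorem~15.6}). Testing the curvature varifold equation for $X$, with $\partial X=0$, against $\varphi(x,P)=\phi(x)\chi(P)$ --- where $\support\phi\subset N$ and $\chi$ is smooth on the Grassmannian, equal to $1$ near one of the three planes $Q\in\{P_1^+,P_3^+,P_5^+\}$ and vanishing near the other two --- the two terms containing the second fundamental form drop out, and the divergence theorem on each flat half-plane evaluates the remaining term to $-\bigl(\int_{L^+\cap N}\phi\,\integrald\hausdorff^1\bigr)\innerproduct{\,\sum_{j\in S:\,P_j^+=Q}\nu_j^+}{e_l}$. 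Letting $\phi$ and $l\in\{1,2,3\}$ vary, this gives $\sum_{j\in S:\,P_j^+=Q}\nu_j^+=0$ for each of the three planes $Q$.

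Finally, by Remark~\ref{vector identities} the set of indices sharing a fixed plane $P^+$ is a pair $\{j,j^+(j)\}$ with $\nu_j^++\nu_{j^+(j)}^+=0$ and $\nu_j^+\neq 0$; hence the previous identity can hold only if $S$ contains the entire pair whenever it meets it. In particular $i\in S$ forces $j^+(i)\in S$, that is $\mass X(\Sigma_{j^+(i)}^+)>0$, which is the assertion. I expect the main obstacle to be the step just carried out: the conormals $\nu_1^+,\nu_3^+,\nu_5^+$ already sum to zero, so the first variation of $X$ alone (equivalently, the choice $\chi\equiv 1$) is blind to the forbidden ``triple junction'' $S=\{1,3,5\}$; one genuinely needs that the weak second fundamental form of $X$ is the classical one on the flat sheets, which is what transfers the obstruction to the Grassmannian, where the half-planes carrying $\Sigma_1^+,\Sigma_3^+,\Sigma_5^+$ sit over three distinct points --- precisely the mechanism of Remark~\ref{remark not curvature}.
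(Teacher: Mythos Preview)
Your proof is correct and follows essentially the same strategy as the paper: localise in the planar region $D^+$ near $L^+$, use Constancy to identify $X$ there with $\sum_{j\in S}\graphv{\Sigma_j^+}$, and then exploit a test function that is a bump in the Grassmannian variable to separate the three distinct planes $P_1^+=P_6^+$, $P_3^+=P_2^+$, $P_5^+=P_4^+$. The paper argues by contradiction (assuming $j^+(i)\notin J$ and exhibiting one test function $\varphi(x,P)=\phi(x_2)\psi(x_1^2+x_3^2)f(P)$ giving $\boundary(X,\varphi)=\nu_i^+\neq 0$), whereas you derive the full balance $\sum_{j\in S:P_j^+=Q}\nu_j^+=0$ for each $Q$ and read off the pairing; this is a cosmetic difference. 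Your explicit remark that the weak second fundamental form of $X$ must coincide with the classical one (hence vanish) on the flat sheets is a point the paper leaves implicit, and it is indeed what makes the $A$-terms drop out so that only the tangential divergence survives. The reference to Lemma~\ref{plus implies minus} in your second paragraph is superfluous---Constancy alone gives the dichotomy on each $\Sigma_j^+$---but harmless.
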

\begin{proof}
Without loss of generality let us assume that $i=1$ and consider the case $\mass{X}(\Sigma_1^+)>0$, in which case $j^+(1)=6$.

Suppose by contradiction that $\mass{X}(\Sigma_6^+)=0$.
It follows from Lemma \ref{constancy} that $\Sigma_1^+\subset\support{\mass{X}}$ and for each $j=2,3,4,5$ either $\mass{X}(\Sigma_j^+)=0$ or $\Sigma_j^+\subset\support{\mass{X}}$.
We may define the set $J=\{j\in\{1,2,3,4,5\}:\Sigma_j^+\subset\support{\mass{X}}\}$ and note that $1\in J$.
In particular $X\restrict D^+ = \sum_{j\in J}\graphv{\Sigma_j^+}\restrict D^+$.

Let $\phi,\psi:\real\rightarrow\real$ be a arbitrary functions of class $C^1$ with compact support satisfying:
\begin{enumerate}[(a)]
\item $\support{\phi}\subset(1,3)$ and $\support{\psi}\subset(-\frac{1}{2},\frac{1}{2})$;
\item $\int_\real\phi(t)\integrald\lebesgue{1}t=1$ and $\psi(0)=1$.
\end{enumerate}

Observe that $P_1\neq P_j$ for all $j=2,3,4,5$ and take $\varepsilon>0$ such that $d(P_1^+,P_j^+)>2\varepsilon$ for all $j=2,3,4,5$, where the distance is with respect to $\grassmannian{2}{3}$.
Now choose a function $f:\grassmannian{2}{3}\rightarrow\real$ of class $C^1$ with compact support such that $\support{f}\subset\openball{P_{1}^+}{\varepsilon}$ and $f(P_1^+)=1$.
Finally we define $\varphi:\totalgrassmannian{2}{\real^3}\rightarrow\real$ as $\varphi(x,P)=\phi(x_2)\psi(x_1^2+x_3^2)f(P)$ and observe that $\varphi$ is a function of class $C^1$ with compact support and $\support{\varphi}\subset D^+$.
It follows that
\begin{equation*}
\begin{aligned}
\boundary(X,\varphi) = & \sum_{j\in J}\nu_j^+\int_{L^+}\phi(x_2)\psi(0)f(P_j^+)\integrald\hausdorff^1x_2\\
             = & \nu_1^+,
\end{aligned}
\end{equation*}
which contradicts the assumption that $X$ is a curvature varifold (without boundary).
The proof of all other cases are exactly the same.
\end{proof}

\begin{remark}
The statement above is in accordance with the identities described in Remark \ref{vector identities}.
That is, the choice of $j^\pm(i)$ is given by the index corresponding to the vector opposite to $\nu^\pm_i$.
\end{remark}

%
%
%

\begin{theorem}
Let $V=W_1+W_2+W_3+W_4+W_5+W_6$, $Z_1=W_1+W_3+W_5$ and $Z_2=W_2+W_4+W_6$.
The collection $\Xi=\{Z_1,Z_2\}$ is the unique decomposition of $V$.
\end{theorem}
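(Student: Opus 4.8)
The plan is to reduce everything to a description of the components of $V$ and then invoke the additivity axioms of a decomposition. By Remark \ref{z indecomposable} the collection $\{Z_1,Z_2\}$ is already a decomposition, so it remains to see it is the only one. Let $\Xi'$ be any decomposition of $V$; by definition every element of $\Xi'$ is a component of $V$, so it suffices to prove that the only components of $V$ are $Z_1$ and $Z_2$: then $\Xi'\subseteq\{Z_1,Z_2\}$, and since $\mass V=\mass{W_1}+\dots+\mass{W_6}=\mass{Z_1}+\mass{Z_2}$ with $\mass{Z_1},\mass{Z_2}\neq0$, the requirement $V(f)=\sum_{W\in\Xi'}W(f)$ for all $f\in\compactfunctions{\totalgrassmannian{2}{\real^3}}{\real}$ cannot be satisfied by $\emptyset$, $\{Z_1\}$ or $\{Z_2\}$, forcing $\Xi'=\{Z_1,Z_2\}$.

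So fix a component $X=V\restrict E$, where $E$ is $\mass V+\mass{\delta V}$-measurable with $V\partial E=0$ and $\mass X\neq0$. Since $\mass V$ is carried by $\bigcup_{i=1}^6\Sigma_i$ (because $\bar\Sigma_i\setminus\Sigma_i\subset L\cup\{0\}$ is a $\hausdorff^1$-set, hence $\mass V$-null), there is an index $i$ with $\mass X(\Sigma_i)>0$. By Lemma \ref{component dichotomy} either $\{1,3,5\}\subseteq J:=\{i:\mass X(\Sigma_i)>0\}$ or $\{2,4,6\}\subseteq J$; I treat the first case, the second being identical after a shift of indices (concretely, applying the reflection $\reflect$, which is a symmetry of $V$ carrying $Z_1$ to $Z_2$). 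The key step — the heart of the argument — is to upgrade $\{1,3,5\}\subseteq J$ to the statement that $E$ coincides, up to $\mass V$-null sets, with $\bigcup_{i\in J}\Sigma_i$. Lemmas \ref{constancy} and \ref{plus implies minus} already give $\Sigma_i\subseteq\support\mass X$ for every $i\in J$; to promote this to $\mass V(\Sigma_i\setminus E)=0$ one argues, as in the proof of Lemma \ref{constancy} via the Constancy Lemma \cite{menne-scharrer2022:arxiv}*{Lemma 6.1}, that $V\partial E=0$ forces $\mathbbm 1_E$ to be $\mass V$-a.e.\ locally constant on the connected surface $\Sigma_i$. Away from $L$, $T$ and the origin, $\Sigma_i$ is a single smooth sheet of $V$ except along the curve $T_{\lceil i/2\rceil}$, where it meets $\Sigma_{i\mp1}$ transversally; there the two sheets contribute to $V\partial E$ in transversal directions and so cannot cancel, so the constancy propagates across $T_{\lceil i/2\rceil}$ as well, and Lemma \ref{plus implies minus} then lets one pass from $\Sigma_i^\pm$ to all of $\Sigma_i$.

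With this in hand I would finish as follows. If $J\supsetneq\{1,3,5\}$, then $J$ contains an even index, so Lemma \ref{component dichotomy} forces $J=\{1,\dots,6\}$, hence $E=\support\mass V$ up to $\mass V$-null sets and $X=V$. But $V$ is not indecomposable: the set $E_1:=\Sigma_1\cup\Sigma_3\cup\Sigma_5$ satisfies $V\partial E_1=0$ (Corollary \ref{distributional boundary total} together with $\nu_1^\pm+\nu_3^\pm+\nu_5^\pm=0$ from Remark \ref{vector identities}), with $\mass V(E_1)>0$ and $\mass V(\real^3\setminus E_1)>0$ since $\mass{W_2}\neq0$ — contradicting that a component is indecomposable. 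Hence $J=\{1,3,5\}$, so $E=E_1$ up to $\mass V$-null sets and $X=V\restrict E_1=Z_1$ (the other case gives $X=Z_2$), which completes the argument. \textbf{The main obstacle} is the constancy step of the previous paragraph: passing from the support statements of Lemmas \ref{constancy}–\ref{plus implies minus} to the conclusion that $E$ is, up to $\mass V$-null sets, a union of entire surfaces $\Sigma_i$, and in particular handling the propagation across the transversal curves $T_k$. An alternative route that works only near the flat triple line $L$ is to observe that $V\partial E=0$ there forces $E$ to be $\mass V$-a.e.\ a union of half-planes $\Sigma_i^\pm$ whose conormals $\nu_i^\pm$ sum to zero, and then use $\{1,3,5\}\subseteq J$ and the identities of Remark \ref{vector identities} to see the only admissible index sets are $\{1,3,5\}$ and $\{1,\dots,6\}$ — but identifying which half-planes actually belong to $E$ near $L$ still rests on the same constancy input.
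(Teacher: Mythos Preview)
Your approach matches the paper's: show every component of $V$ equals $Z_1$ or $Z_2$ by combining Lemmas \ref{constancy}, \ref{plus implies minus} and \ref{component dichotomy}, then rule out $X=V$ by indecomposability. The step you flag as the main obstacle is not actually a gap: the Constancy Lemma invoked in the proof of Lemma \ref{constancy} already yields the all-or-nothing alternative $\mass V(\Sigma_i^\pm\cap E)=0$ or $\mass V(\Sigma_i^\pm\setminus E)=0$ on each connected sheet $\Sigma_i^\pm$ (these twelve sheets are pairwise disjoint, with $T_k$ lying only in their closures, so the density-one hypothesis holds), and Lemma \ref{plus implies minus} then links $\Sigma_i^+$ to $\Sigma_i^-$ across $T_{\lceil i/2\rceil}$; the paper's proof uses exactly this implicitly when passing from $X\neq Z_1$ to the existence of an even $j_0$ with $\mass X(\Sigma_{j_0})>0$, and from $\bigcup_i\Sigma_i\subset\support{\mass X}$ to $X=V$.
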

\begin{proof}
Suppose $X\in\varifolds_2(\real^3)$ is a component of $V$ and $X\not\in\{Z_1,Z_2\}$.
Since $X\neq 0$ and $\support{\mass{V}}=\cup_{i=1}^6\bar\Sigma_i$ we must have $\mass{X}(\Sigma_{i_0})>0$ for some $i_0\in\{1,\ldots,6\}$.

Let us first assume that $i_0$ is odd, hence Lemmas \ref{component dichotomy} and \ref{plus implies minus} imply that $\Sigma_i\subset\support{\mass{X}}$ for all $i=1,3,5$.
By assumption $X\neq Z_1$, that is, there exists $j_0\in\{2,4,6\}$ such that $\mass{X}(\Sigma_{j_0})>0$.
Again by Lemmas \ref{component dichotomy} and \ref{plus implies minus} we have $\Sigma_j\subset\support{\mass{X}}$ for all $j=2,4,6$, which implies $X=V$ and contradicts the fact that $X$ is indecomposable.

Had we begun by assuming $i_0$ to be even, we would have obtained the same contradiction, which concludes the proof.
\end{proof}

\begin{corollary}
The varifold $V=W_1+W_2+W_3+W_4+W_5+W_6\in\cvarifolds_2(\real^3)$ is a curvature varifold (without boundary) that does not admit a decomposition by curvature varifolds (without boundary).
\end{corollary}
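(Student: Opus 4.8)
The plan is to read off the conclusion from the results already assembled in this section; the only ingredient that is not literally stated earlier is that $V$ itself lies in $\cvarifolds_2(\real^3)$.

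\textbf{Step 1 ($V\in\cvarifolds_2(\real^3)$).} The surfaces $\Sigma_1,\dots,\Sigma_6$ overlap only along the lines $L$ and $T_1,T_2,T_3$. Since $L$ is $\mass V$-null, and since at each point of $T_k$ the two sheets $\Sigma_{2k-1},\Sigma_{2k}$ meet transversally — so their tangent planes, hence the associated points of $\totalgrassmannian{2}{\real^3}$, are distinct — the functions $A_{ijk}$ attached to the individual $W_i$ in Lemma \ref{w_i boundary} patch together without conflict into a single $A\in\lploc{1}(\totalgrassmannian{2}{\real^3},\real^{3^3};V)$. Moreover each $T_k$ is an interior curve of the two surfaces containing it, so no $W_i$ carries boundary along any $T_k$. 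As $\boundary_l(\cdot,\varphi)$ is linear in the varifold and $V=\sum_{i=1}^6 W_i$, Lemma \ref{w_i boundary} yields
\begin{equation*}
\boundary_l(V,\varphi)=\sum_{i=1}^6\boundary_l(W_i,\varphi)=-\int_{\totalgrassmannian{2}{\real^3}}\varphi\,\integrald\Bigl(\textstyle\sum_{i=1}^6\partial_l W_i\Bigr)
\end{equation*}
for every $\varphi\in\conecompact{\totalgrassmannian{2}{\real^3}}$ and $l=1,2,3$. Now $\sum_{i=1}^6\partial W_i$ is the pushforward, under the tangent maps $\tangent{\Sigma_i}_\sharp$, of $\sum_i\nu_i^+\hausdorff^1\restrict L^+ +\sum_i\nu_i^-\hausdorff^1\restrict L^-$, and along $L^\pm$ the tangent plane of $\Sigma_i$ is $P_i^\pm$. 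Grouping the six contributions over $L^+$ by their common tangent plane ($P_1^+=P_6^+$, $P_2^+=P_3^+$, $P_4^+=P_5^+$) and invoking $\nu_1^++\nu_6^+=\nu_2^++\nu_3^+=\nu_4^++\nu_5^+=0$ from Remark \ref{vector identities}, together with the analogous pairing over $L^-$, gives $\sum_{i=1}^6\partial W_i=0$ as a vector measure on $\totalgrassmannian{2}{\real^3}$. Hence $\partial V=0$, i.e.\ $V\in\cvarifolds_2(\real^3)$. Its generalized mean curvature lies in $\lploc{1}(\real^3,\real^3;\mass V)$, as already noted in the proof of Lemma \ref{distributional boundary pm}, so $\mass{\delta V}$ is Radon and the decomposition theory applies.

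\textbf{Steps 2--3 (decomposition and conclusion).} By Remark \ref{z indecomposable}, $\Xi=\{Z_1,Z_2\}$ is a decomposition of $V$, and by the preceding Theorem it is the only one. By Remark \ref{remark not curvature}, neither $Z_1$ nor $Z_2$ belongs to $\cvarifolds_2(\real^3)$ — alternatively $\mass{Z_1}(\Sigma_1^+)>0$ while $\mass{Z_1}(\Sigma_6^+)=0$ although $\nu_1^++\nu_6^+=0$, so $Z_1$ violates Lemma \ref{planar support}, and likewise for $Z_2$. Since every decomposition of $V$ equals $\{Z_1,Z_2\}$, no decomposition of $V$ consists solely of curvature varifolds without boundary, which proves the corollary.

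\textbf{Main point.} There is no serious obstacle: almost everything is cited. The one piece of genuine content is the cancellation in Step 1, and the subtlety is that $\sum_i\partial W_i$ must vanish as a measure on the Grassmannian bundle $\totalgrassmannian{2}{\real^3}$, not merely after projection to $\real^3$. This works precisely because each cancelling pair of boundary terms ($\{1,6\},\{2,3\},\{4,5\}$ over $L^+$ and $\{1,4\},\{3,6\},\{2,5\}$ over $L^-$) is carried by a single tangent plane $P_i^\pm$, so the Dirac mass in the Grassmannian direction is common to both members of the pair and the opposite conormal vectors annihilate it upstairs.
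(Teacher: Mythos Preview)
Your proof is correct and follows the paper's intended route: the paper states the corollary without proof, as an immediate consequence of the preceding Theorem, Remark \ref{z indecomposable} and Remark \ref{remark not curvature}, and you have correctly supplied the only piece not made fully explicit, namely $V\in\cvarifolds_2(\real^3)$, via the pairwise cancellations $P_i^\pm=P_{j^\pm(i)}^\pm$, $\nu_i^\pm+\nu_{j^\pm(i)}^\pm=0$ from Remark \ref{vector identities} applied to $\sum_i\partial W_i$ on $\totalgrassmannian{2}{\real^3}$. Your identification of the subtlety---that cancellation must occur upstairs in the bundle, which works because each cancelling pair shares a common tangent plane along $L^\pm$---is exactly the point.
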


\begin{bibdiv}
\begin{biblist}

\bib{aiex2024:arxiv}{article}{
      author={Aiex, Nicolau~S.},
       title={{A Corrected Proof of the Graphical Representation of a Class of
  Curvature Varifolds by $C^{1,\alpha}$ Multiple Valued Functions}},
        date={2024},
     journal={arXiv:2409.11861 [math.DG]},
      eprint={2409.11861},
}

\bib{allard1972}{article}{
      author={Allard, William~K.},
       title={On the first variation of a varifold},
        date={1972},
        ISSN={0003-486X},
     journal={Ann. of Math. (2)},
      volume={95},
       pages={417\ndash 491},
      review={\MR{0307015 (46 \#6136)}},
}

\bib{hutchinson1986.2}{incollection}{
      author={Hutchinson, John~E.},
       title={{$C^{1,\alpha}$} multiple function regularity and tangent cone
  behaviour for varifolds with second fundamental form in {$L^p$}},
        date={1986},
   booktitle={Geometric measure theory and the calculus of variations
  ({A}rcata, {C}alif., 1984)},
      series={Proc. Sympos. Pure Math.},
      volume={44},
   publisher={Amer. Math. Soc., Providence, RI},
       pages={281\ndash 306},
         url={https://doi.org/10.1090/pspum/044/840281},
      review={\MR{840281}},
}

\bib{hutchinson1986}{article}{
      author={Hutchinson, John~E.},
       title={Second fundamental form for varifolds and the existence of
  surfaces minimising curvature},
        date={1986},
        ISSN={0022-2518},
     journal={Indiana Univ. Math. J.},
      volume={35},
      number={1},
       pages={45\ndash 71},
         url={https://doi.org/10.1512/iumj.1986.35.35003},
      review={\MR{825628}},
}

\bib{mantegazza1996}{article}{
      author={Mantegazza, Carlo},
       title={Curvature varifolds with boundary},
        date={1996},
        ISSN={0022-040X,1945-743X},
     journal={J. Differential Geom.},
      volume={43},
      number={4},
       pages={807\ndash 843},
         url={http://projecteuclid.org/euclid.jdg/1214458533},
      review={\MR{1412686}},
}

\bib{menne2016.1}{article}{
      author={Menne, Ulrich},
       title={Weakly differentiable functions on varifolds},
        date={2016},
        ISSN={0022-2518},
     journal={Indiana Univ. Math. J.},
      volume={65},
      number={3},
       pages={977\ndash 1088},
         url={https://doi.org/10.1512/iumj.2016.65.5829},
      review={\MR{3528825}},
}

\bib{menne-scharrer2022:arxiv}{article}{
      author={Menne, Ulrich},
      author={Scharrer, Christian},
       title={{A priori bounds for geodesic diameter. Part II. Fine
  connectedness properties of varifolds}},
        date={2022},
     journal={arXiv:2209.05955v2 [math.DG]},
      eprint={2209.05955v2},
}

\bib{mondino2014}{article}{
      author={Mondino, Andrea},
       title={Existence of integral {$m$}-varifolds minimizing {$\int|A|^p$}
  and {$\int|H|^p,\,p>m,$} in {R}iemannian manifolds},
        date={2014},
        ISSN={0944-2669},
     journal={Calc. Var. Partial Differential Equations},
      volume={49},
      number={1-2},
       pages={431\ndash 470},
         url={https://doi.org/10.1007/s00526-012-0588-y},
      review={\MR{3148123}},
}

\end{biblist}
\end{bibdiv}

\end{document}